\theoremstyle{plain}
\newtheorem{thm}{Theorem}[section]
\newtheorem{lemma}[thm]{Lemma}
\newtheorem{cor}[thm]{Corollary}
\theoremstyle{definition}
\newtheorem{define}{Definition}[section]
\newtheorem*{define*}{Definition}
\theoremstyle{remark}
\newtheorem*{remark*}{Remark}
\newcommand{\inte}{\ensuremath{\mathbb{Z}}}
\newcommand{\complex}{\ensuremath{\mathbb{C}}}
\newcommand{\C}{\ensuremath{\mathbb{C}}}
\title{
Large supremum norms and small Shannon entropy 
for Hecke eigenfunctions of quantized cat maps}
\author{Rikard Olofsson
\thanks{The author is supported by Knut och Alice Wallenbergs Stiftelse }}
\begin{document}

\maketitle

\begin{abstract}
This paper concerns the behavior of eigenfunctions of quantized cat maps and in 
particular their supremum norm. We observe that for composite integer values of 
$N,$ the inverse of Planck's constant, 
some of the desymmetrized eigenfunctions have very small support and hence 
very large supremum norm. We also prove an entropy estimate and show that our 
functions satisfy equality in this estimate.
In the case when $N$ is a prime power with even 
exponent we 
calculate the supremum norm for a large proportion of all 
desymmetrized eigenfunctions and we find that for a given $N$ there is 
essentially at most four different values these assume.  
\end{abstract}

\section{{\large Introduction}}
A well studied model in quantum chaos is the so called quantized 
cat map - a ``quantized version'' of the 
dynamical system given by a hyperbolic 
(i.e. with $|\operatorname{tr}(A)|>2$) matrix $A\in SL(2,\inte )$ 
acting on the two dimensional torus. The quantization of these systems 
is a unitary operator $U_N(A)$ acting on the space 
$L^2\left(\inte _N\right)\cong \C ^N.$ 
This model was first introduced by Berry and Hannay \cite{Berry} and has been
developed in a number of papers \cite{Kna,Esp1,Esp2,Kli,Zel,KR2,Gur,Mez}.
The general idea is that the chaotic behavior of the classical system 
corresponds to eigenfunctions of the quantized system being 
``nicely spread out'' in the so called 
semiclassical limit, that is, when $N$ goes 
to infinity. $U_N(A)$ can have large degeneracies, but as Kurlberg and 
Rudnick explained in \cite{KR2}, this is a consequence of 
quantum symmetries in our model. Namely, there is a large abelian group of 
unitary 
operators commuting with $U_N(A).$ In analogy with the theory of modular forms, 
these operators are called Hecke operators and their joint eigenfunctions are 
called Hecke eigenfunctions. 
Kurlberg and Rudnick showed that the Hecke eigenfunctions 
become uniformly distributed as $N\to\infty,$ a fact often referred 
to as arithmetic quantum 
unique ergodicity (QUE) for cat maps. 

Another natural question relating to
eigenfunctions ``spreading out'' in the limit is the question of 
estimating their supremum norms. Given the matrix $A,$ the primes 
(all but a finite number of them to be exact) 
can be divided in two parts called split and inert, and  
in \cite{KR} and \cite{Kurl2} it was shown that for such 
prime numbers $N$ the 
supremum norm of $L^2-$normalized Hecke 
eigenfunctions are bounded by $2/\sqrt{1-1/N}$ and $2/\sqrt{1+1/N}$ 
respectively. As an immediate consequence of 
this it follows that as long as $N$ is square free, all Hecke eigenfunctions 
$\psi$ fulfill $\| \psi \|_\infty=O\left(N^\epsilon\right)$ 
for all $\epsilon >0.$
For general $N$ we 
only know that the supremum is 
$O_\epsilon\left(N^{3/8+\epsilon}\right)$ for all $\epsilon >0$ (cf. \cite{KR}).
In view of the results for prime numbers $N$ and the quantum unique ergodicity, 
one might think that all Hecke eigenfunctions 
have small supremum norm, maybe even smaller than $N^\epsilon$ for all 
$\epsilon>0,$ however this is not the case. In this paper we observe that, 
unless $N$ is square free, some of the 
Hecke eigenfunctions are localized on ideals of $\inte _N$ and for such 
functions we get rather large supremum norms. To be more precise, if 
$N=a^2$ we can find an eigenfunction with supremum norm $\gg N^{1/4}.$
This result is
somewhat analogous with the result of Rudnick and Sarnak \cite{Sar} 
concerning the 
supremum norm of eigenfunctions of the Laplacian of a special class of 
arithmetic compact 3-manifolds. They show that the supremum of some so called 
``theta lifts'' are $\gg\lambda^{1/4},$ where $\lambda$ is the 
corresponding eigenvalue. For a $L^2$-normalized 
function in $L^2\left(\inte _N\right)$ it is trivial to see that the maximal 
supremum is $N^{1/2}$ and the (sharp) general upper bound 
for the supremum of an eigenfunction of the Laplacian of a compact manifold is 
$O\left(\lambda^{(d-1)/4}\right),$ where $d$ is the dimension of the manifold
(cf. \cite{See}). For $d=3,$ we see that the growth we obtain for our 
eigenfunctions is analogous to  
the growth of the ``theta lifts''
in the sense that they are both the square root of the largest possible 
growth.

In Theorem~\ref{iso} we note that 
the action of $U_N$ on the subspace spanned by the Hecke eigenfunctions 
localized on ideal is isomorphic to
the action of $U_{N'}$ on $L^2\left(\inte _{N'}\right)$ for some $N'|N.$ 
This means that one can think of these eigenfunctions as the analogue of 
what in the theory of automorphic forms is called old forms.
Hecke eigenfunctions that are orthogonal to the old forms play the role of 
new forms. Note that the existence of old forms, although their supremum 
is large, has small relation to the concept of scaring. On the one hand we 
know from the result of Kurlberg and Rudnick that no scaring is possible for 
Hecke eigenfunctions, 
and on the other hand the ideals themselves equidistribute, 
hence it is not surprising that old forms do not contribute to scars. 

Another quantity one can study in order to determine how well 
eigenfunctions ``spread out'' is the Shannon entropy, a large entropy signifies 
a well-spread function. This has been done in a recent 
paper by Anantharaman and Nonnenmacher \cite{Ana} 
for the baker's map. In this study they use estimates from below of the Shannon 
entropy to show that the Kolmogorov-Sinai entropy of the 
induced limit measures (so called semiclassical measures) is always at least 
half of the Kolmogorov-Sinai entropy of the Lebesgue measure. 
We prove that the equivalent estimate
of the Shannon entropies is true for eigenfunctions of the quantized cat map and 
that our large eigenfunctions makes this estimate sharp. Even though the 
Hecke eigenfunctions do not contribute to scars 
(which other sequences of eigenfunctions do) they are still as badly spread out 
as possible in the sense of entropy.
That is, even though the only limiting measure of Hecke functions is the 
Lebesgue measure and this has {\em maximal} Kolmogorov-Sinai entropy, 
some of the Hecke functions have {\em minimal} Shannon entropy.

\begin{figure}[!h]
\begin{center}
\begin{tabular}{c c}
\includegraphics [width=5.9 cm]{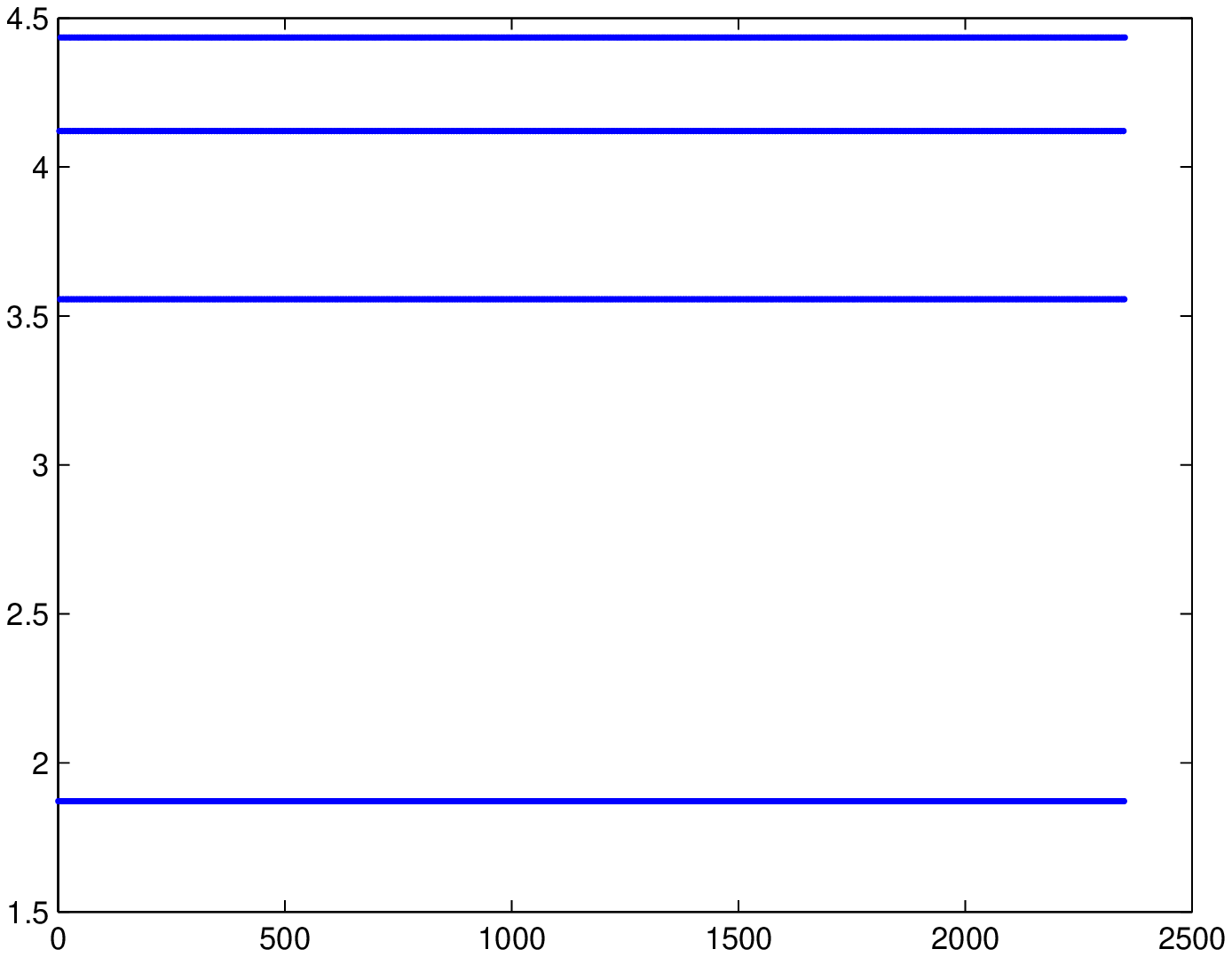} 
& \includegraphics [width=5.9 cm]{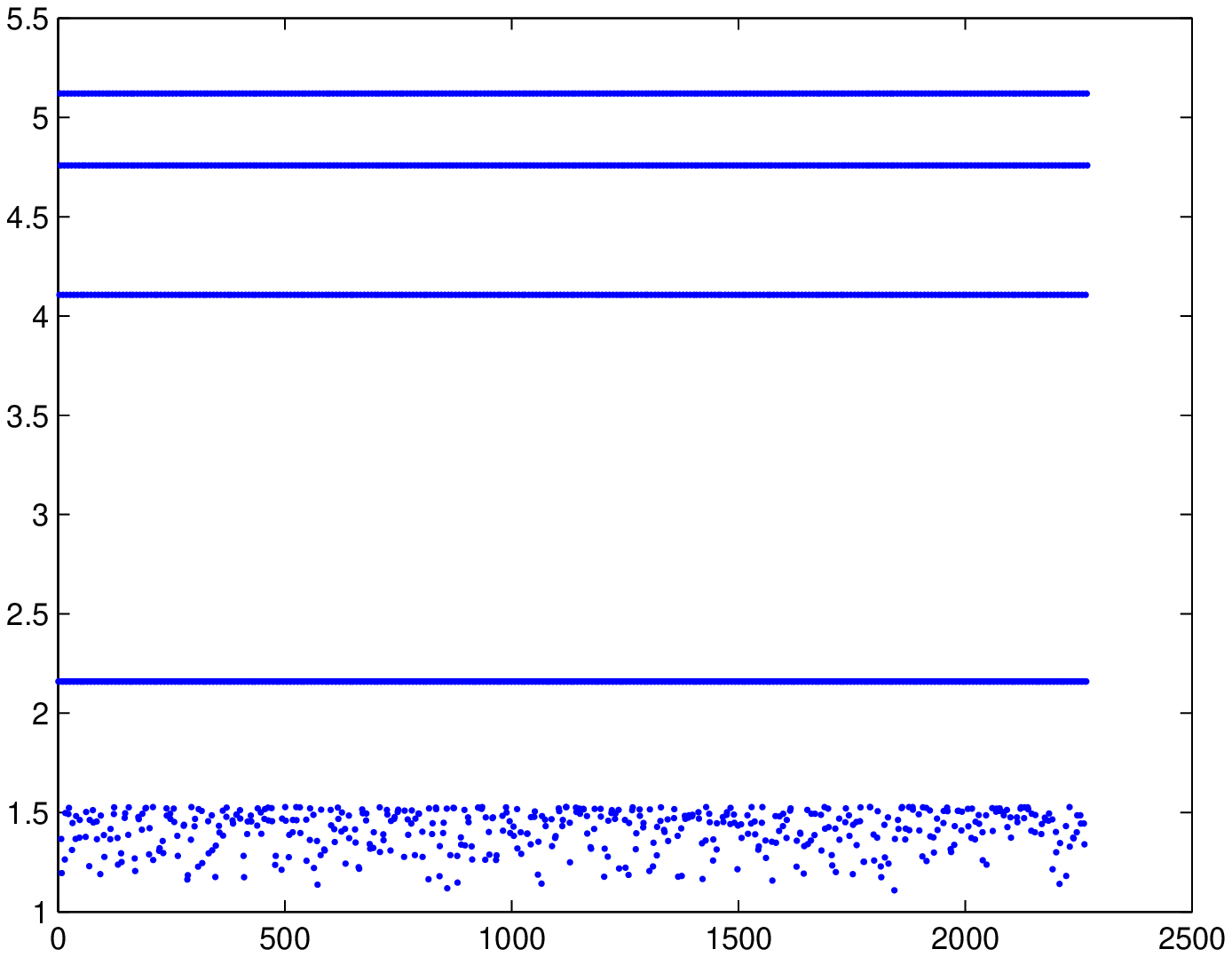}\\
\includegraphics [width=5.9 cm]{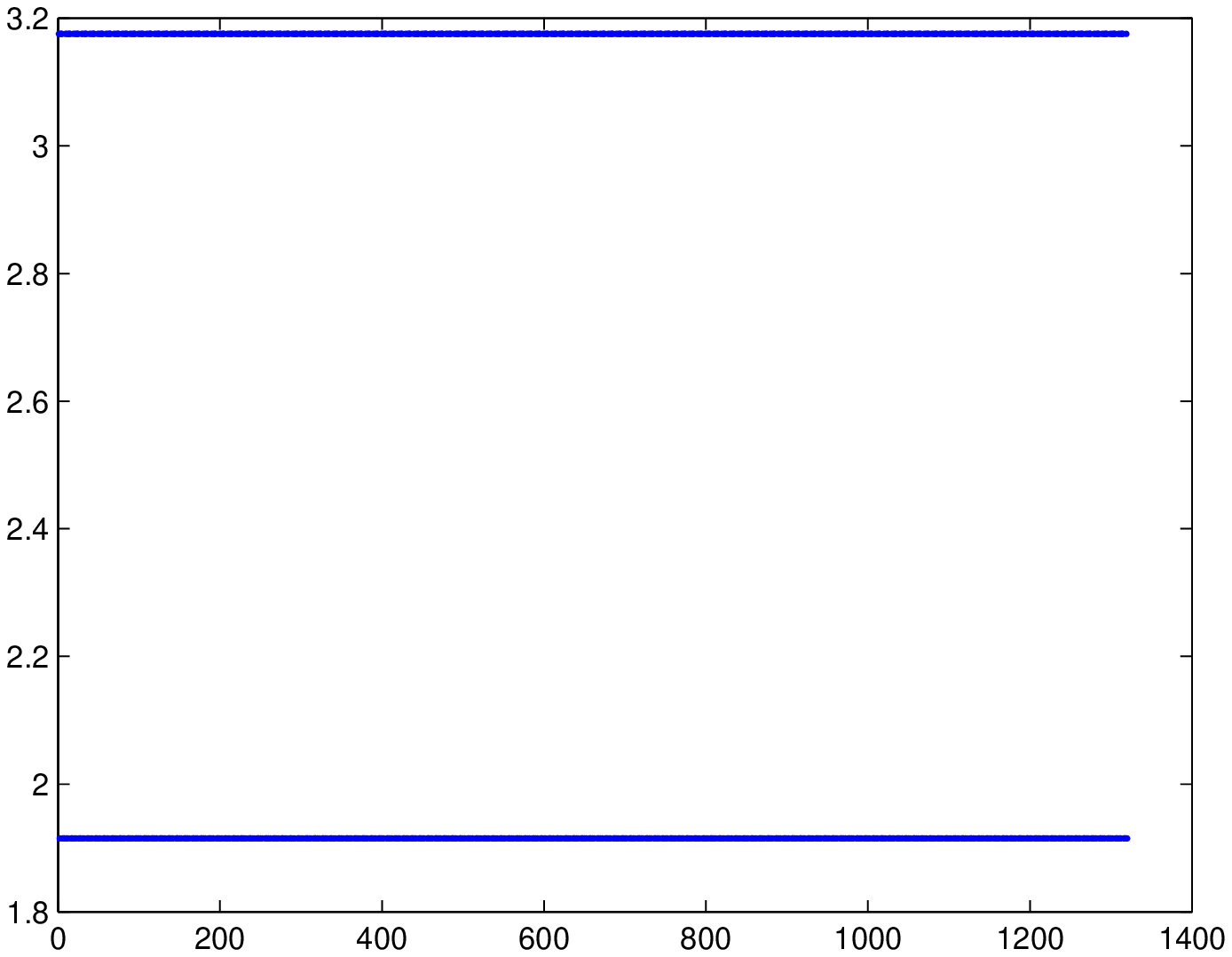} 
& \includegraphics [width=5.9 cm]{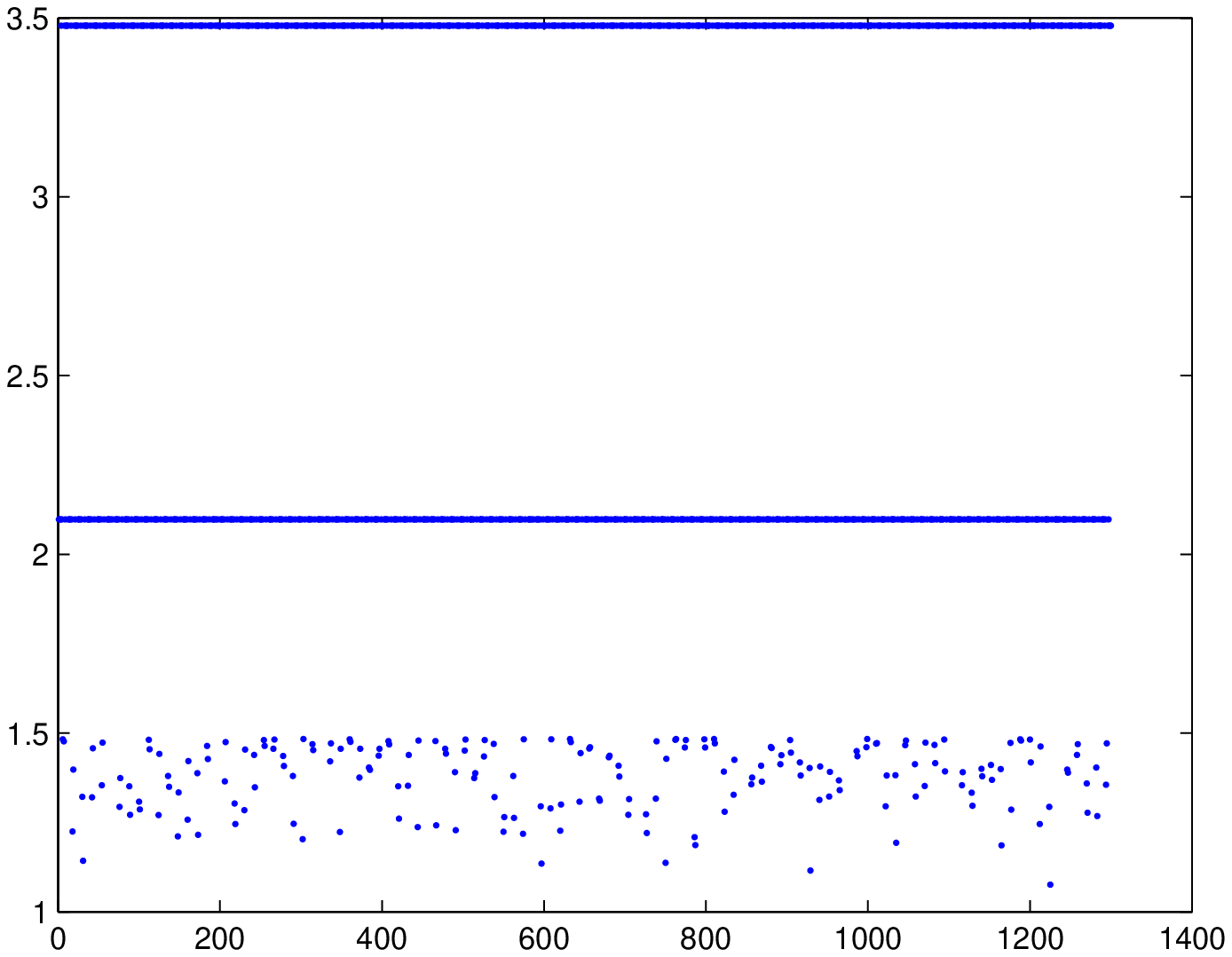}
\end{tabular}
\caption{The supremum norm of all the new forms of a given matrix $A$: 
in the upper row $N=7^4$ and in the lower $N=11^3$
and in the left 
column the primes ( i.e. $7,11$) are inert, while 
in the right column, they are split. The new forms are ordered with respect to
growing phase (in the interval $[-\pi,\pi)$) 
of their eigenvalues, when these are 
evaluated at some specific element of maximal order in the Hecke algebra.}
\end{center}
\end{figure}

In the study of new forms a very surprising phenomena occurs;
assume for simplicity that $N=p^k$ with $k>1,$ then it seems like the 
space is divided into two or four different 
subspaces and Hecke eigenfunctions in the same space have the same or almost 
the same supremum norm. 
These norms are not dependent on $A$ other than that the normalization 
factor is different if $A$ makes $p$ split or inert. We will derive these 
properties in the case where the power of $p$ is even. This is done using an 
arithmetic description of the Hecke eigenfunctions introducing two parameters
$C$ and $D$ where the different lines corresponds to the solvability of second 
and 
third order equations of $CD$ modulo $p.$ Moreover, the exact values these 
supremum norms are calculated. The lower line is not a true line but rather a 
strip of width $O(N^{-1})$ below the value $2/\sqrt{1\pm 1/p}$
corresponding to $p$ being split or inert. This is the same value as 
the known bound for primes $N.$ 
The other lines are true lines and their value is calculated in 
Theorem~\ref{supformel}, the values are of the size $N^{1/6}.$ 
The ``noise'' we see for the split case is also explained and corresponds to 
$p|C.$  
But as we see in figure 1, numerical simulations 
suggests that similar properties hold also for odd powers and this will be 
explored in a forthcoming paper. 


Our calculations show that if $N=p^{2k}$ 
($p>3$ and $p$ is either split or inert) 
then the supremum of all Hecke eigenfunctions is bounded by $N^{1/4}$ and this 
estimate is sharp. By multiplicativity this is then true for all products of 
such $N.$

\section{{\large Acknowledgments}}
I would like to express me gratitude to my advisor Pär Kurlberg for his 
knowledge and enthusiasm for the subject. 
I also thank Michael Björklund for the many helpful discussions we have had 
concerning this paper.

\section{{\large Short description of the model}}
This will be a very brief introduction to quantized cat maps, more or less just 
introducing the notation we will use. A more extensive 
setup can be found in \cite{KR2}. 
\\[15 pt]
Take a matrix $A\in SL(2, \inte ).$ We assume that 
$|\operatorname{tr} (A)|>2$ to make the system chaotic and that 
the diagonal entries of $A$ are odd and the off diagonal are even. If $N$ is 
even we make the further assumption that $A\equiv I~(\mbox{mod}~4).$ These 
congruence assumptions are needed in order for the quantization of the 
dynamics to be consistent with the quantization of observables. 
In each time step we map 
$x\in \mathbb{T}^2\cong\mathbb{R}^2/\mathbb{Z}^2$ to $Ax\in \mathbb{T}^2$ 
and observables $f\in C^\infty\left( \mathbb{T}^2\right)$ are sent to 
$f\circ A.$ The quantization of this is a unitary operator $U_N(A)$ acting on 
``the state space'' $L^2\left(\inte _N\right),$ equipped with the inner product
$$<\phi,\psi>=\frac{1}{N}\sum_{Q\in \inte _N}\phi(Q)\overline{\psi(Q)}.$$
Assume for a moment that we know how to define $U_N(A)$ when $N$ is a prime 
power. For general $N$ we write 
$N=p_1^{\alpha_1}...p_m^{\alpha_m}$ and via the Chinese remainder theorem we 
get an isomorphism between $L^2\left(\inte _N\right)$ and 
$\bigotimes_{j=1}^mL^2\left(\inte _{p_j^{\alpha_j}}\right).$ Using this 
decomposition we define $U_N(A):=\otimes_{j=1}^mU_{p_j^{\alpha_j}}(A).$
We now only have to define $U_{p^k}(A):$
Identify $A$ with its image in $SL(2,\inte_{p^k})$ 
and use the Weil representation to quantize $A.$ 
This is a representation of $SL(2,\inte_{p^k})$ on 
$L^2\left(\inte _{p^k}\right),$
which for odd primes $p$ is
given on the generators by
\begin{align}
\label{1}
U_{p^k}(n_b)\psi(x)&=e\left(\frac{rbx^2}{p^k}\right)\psi(x)\\
\label{2}
U_{p^k}(a_t)\psi(x)&=\Lambda(t)\psi(tx)\\
\label{3}
U_{p^k}(\omega)\psi(x)&=\frac{S_r\left(-1,p^k\right)}{\sqrt{p^k}}\sum_
{y\in\inte _{p^k}}\psi(y)e\left(\frac{2rxy}{p^k}\right),
\end{align}
where we have introduced the notation 
$$n_b= \left( \begin {array}{ccc} 1 & b\\
\noalign{\medskip} 0 & 1\end {array} \right),~~
a_t= \left( \begin {array}{ccc} t & 0\\
\noalign{\medskip} 0 & t^{-1}\end {array} \right),~~
\omega=\left( \begin {array}{ccc} 0 & 1\\
\noalign{\medskip} -1 & 0\end {array} \right) \mbox{~and~~}e(x)=e^{i2\pi x}.$$ 
($\Lambda(t)$ and $S_r\left(-1,p^k\right)$ are numbers with absolute value 1 and 
$r$ is a specific unit in $\inte_N,$ see \cite{KR2} for details.)
For $p=2$ the construction is similar but not quite the same: 
The subgroup of all matrices congruent to the identity modulo $4$ in 
$SL\left(2,\inte _{2^k}\right)$ 
is generated by $n_b,$ $n_c^T,$ $a_t$ and 
$U_{2^k}(n_b),$ $U_{2^k}(a_t)$ are given by (\ref{1}) and (\ref{2}) with $p=2.$
Finally $U_{2^k}(n_c^T)=H^{-1}U_{2^k}(n_{-c})H,$ where $H$ is the operator 
associated to the expression in (\ref{3}) with $p=2.$ 

The Hecke operators corresponding to the matrix $A$ 
are all the operators written as $U_N(g),$ where $g=xI+yA$ and $g$ has 
determinant $1.$

\section{{\large Hecke eigenfunctions with large supremum norm}}
\begin{define}
For $k\ge m\ge n$ we let 
$$S_k(m,n)=\left\{f\in L^2\left(\mathbb{Z}_{p^k}\right); p^m |x-y 
\Rightarrow f(x)=f(y)~~~ \wedge~~~p^n \notdivides x \Rightarrow 
f(x)=0\right\}.$$
\end{define}
\begin{remark*}
$S_k(m,n)$ can be canonically embedded into $L^2\left(\mathbb{Q}_p\right).$ 
As functions of the p-adic numbers these functions are called Schwartz functions 
because of their analogy with the Schwartz functions of a real variable.
\end{remark*}
\begin{thm}
\label{inv}
Let $p$ be an odd prime and $m\le k\le 2m.$ 
Then $S_k(m,k-m)$ is invariant under the action of $U_{p^k}.$
\end{thm}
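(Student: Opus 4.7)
The plan is to verify directly that each of the three generators $n_b$, $a_t$, $\omega$ of $SL(2,\mathbb{Z}_{p^k})$ preserves $S_k(m,k-m)$. Since $U_{p^k}$ is a representation, invariance under the generators implies invariance under all of $U_{p^k}$, and a general $A\in SL(2,\mathbb{Z})$ reduces modulo $p^k$ to a product of such generators.

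Fix $f\in S_k(m,k-m)$, so $f$ is supported on $p^{k-m}\mathbb{Z}_{p^k}$ and factors through $\mathbb{Z}_{p^k}\to\mathbb{Z}_{p^m}$. The case of $a_t$ is immediate: since $t$ is a unit, $p^{k-m}\mid tx$ iff $p^{k-m}\mid x$, and $p^m\mid tx-ty$ iff $p^m\mid x-y$. For $n_b$, the operator multiplies $f$ by the phase $e(rbx^2/p^k)$, which leaves the support condition untouched, so I only need to check that this phase is constant modulo $p^m$ on the support. Writing $x=p^{k-m}a$ gives
$$\frac{rbx^2}{p^k}=\frac{rba^2}{p^{2m-k}},$$
which is well defined exactly because $k\le 2m$, and it depends only on $a\bmod p^{2m-k}$, hence only on $x\bmod p^m$ within the support. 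So $U_{p^k}(n_b)f\in S_k(m,k-m)$.

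The main content is $\omega$. Write $(U_{p^k}(\omega)f)(x)=C\sum_{y}f(y)e(2rxy/p^k)$ with $C$ the stated constant, and parametrize the support by $y=p^{k-m}a$ for $a\in\mathbb{Z}_{p^m}$; further decompose $a=p^{2m-k}b+c$ with $b\in\mathbb{Z}_{p^{k-m}}$ and $c\in\mathbb{Z}_{p^{2m-k}}$, using that $f(p^{k-m}a)$ depends only on $c$. The inner sum over $b$ is a geometric sum
$$\sum_{b\in\mathbb{Z}_{p^{k-m}}}e\!\left(\frac{2rxb}{p^{k-m}}\right),$$
which equals $p^{k-m}$ when $p^{k-m}\mid x$ (using $p$ odd and $r$ a unit) and vanishes otherwise. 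This immediately gives the support condition for $U_{p^k}(\omega)f$. Then, writing $x=p^{k-m}x_0$, the remaining sum over $c$ takes the form $\sum_c f(p^{k-m}c)e(2rx_0c/p^{2m-k})$, which depends only on $x_0\bmod p^{2m-k}$, i.e.\ only on $x\bmod p^m$. Thus $U_{p^k}(\omega)f\in S_k(m,k-m)$.

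The only real obstacle is keeping the divisibilities $k-m$, $2m-k$, $m$ straight throughout the $\omega$ computation; everything else is essentially bookkeeping. The hypothesis $m\le k\le 2m$ is used in two essential places: it makes $rba^2/p^{2m-k}$ a genuine element of $\mathbb{Q}/\mathbb{Z}$ (for $n_b$), and it ensures the parametrization $a=p^{2m-k}b+c$ of $\mathbb{Z}_{p^m}$ is well posed (for $\omega$). Once the three generator cases are established, the theorem follows.
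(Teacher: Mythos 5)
Your proposal is correct and follows essentially the same route as the paper: you check the three generators $n_b$, $a_t$, $\omega$, with $n_b$ handled by the phase depending only on $x\bmod p^m$ within the support, and $\omega$ handled by the same decomposition of the index into $p^{2m-k}$ and $p^{k-m}$ parts, with the geometric sum over the latter enforcing the support condition. Your treatment of $a_t$ and your explicit remarks on where $m\le k\le 2m$ enters are slightly more detailed than the paper's, but the substance is the same.
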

\begin{proof}
Let $f\in S_k(m,k-m).$ It is easy to see that 
$U_{p^k} (a_t)f\in S_k(m,k-m)$ and that $U_{p^k} (n_b)f(x)=0$ 
if $p^{k-m}\notdivides x.$
Moreover we have
\begin{align*}
U_{p^k}(n_b) f\left(p^{k-m}x+yp^m\right) =&
e\left(\frac{rb\left(p^{k-m}\left(x+yp^{2m-k}\right)\right)^2}{p^k}\right)
f\left(p^{k-m}x+yp^m\right)\\
=&e\left(\frac{rb\left(x+yp^{2m-k}\right)^2}{p^{2m-k}}\right)f\left(p^{k-m}
x\right)\\
=&e\left(\frac{rbx^2}{p^{2m-k}}\right)f\left(p^{k-m}
x\right)
=U_{p^k}(n_b)f\left(p^{k-m}x\right)
\end{align*}
and
\begin{align*}
U_{p^k}(\omega) f(x) =& 
\frac{S_r\left(-1,p^k \right)}{\sqrt{p^k}}\sum_{y\in \inte_{p^k}}
f(y)e\left(\frac{2rxy}{p^k}\right)\\
=&\frac{S_r\left(-1,p^k \right)}{\sqrt{p^k}}\sum_{y\in \inte_{p^m}}
f\left(yp^{k-m}\right)e\left(\frac{2rxy}{p^m}\right)\\
=&\frac{S_r\left(-1,p^k \right)}{\sqrt{p^k}}
\sum_{a\in \inte_{p^{2m-k}}}\sum_{b\in \inte_{p^{k-m}}}
f\left(\left(a+p^{2m-k}b\right)p^{k-m}\right)\\
&e\left(\frac{2rx\left(a+p^{2m-k}b\right)}{p^m}\right)\\
=&\frac{S_r\left(-1,p^k \right)}{\sqrt{p^k}}
\sum_{a\in \inte_{p^{2m-k}}}f\left(ap^{k-m}\right)e\left(\frac{2rxa}{p^m}\right)
\sum_{b\in \inte_{p^{k-m}}}e\left(\frac{2rxb}{p^{k-m}}\right).
\end{align*}
If $p^{k-m}\notdivides  x$ then the sum over $b$ is equal to zero, 
and the sum over 
$a$ only depends on the remainder of $x$ modulo $p^m.$ Thus 
$U_{p^k}(\omega) f\in S_k(m,k-m)$ which concludes the proof.
\end{proof}
\begin{thm}
\label{SATSEN}
Let $N=p^k$, where $p$ is an odd prime. 
Then there exists Hecke eigenfunctions $\psi\in L^2(\inte _N)$ 
such that $\| \psi \|_2=1$ and 
$$\| \psi \|_\infty \ge p^{\left[\frac{k}{2}\right]/2}.$$ 
\end{thm}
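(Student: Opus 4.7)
The plan is to realize a Hecke eigenfunction inside the invariant subspace $S_k(m,k-m)$ provided by Theorem~\ref{inv}, and to exploit the smallness of its support.

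First I would choose $m=\lceil k/2\rceil$. For $k\ge 1$ this satisfies $m\le k\le 2m$, so Theorem~\ref{inv} applies and $S_k(m,k-m)$ is a non-trivial subspace of $L^2(\inte_{p^k})$ invariant under every $U_{p^k}(g)$ with $g\in SL(2,\inte_{p^k})$. Since the Hecke operators are of the form $U_N(xI+yA)$ with $xI+yA\in SL(2,\inte_{p^k})$, they all preserve $S_k(m,k-m)$. They form a commuting family of unitary operators on this finite-dimensional space, hence can be simultaneously diagonalised; pick a common eigenvector $\psi$ and normalise it so that $\|\psi\|_2=1$.

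Next I would read off the size of the support from the definition: a function in $S_k(m,k-m)$ vanishes off the ideal $p^{k-m}\inte_{p^k}$, which contains exactly $p^{k-(k-m)}=p^m$ elements. Therefore
\[
1=\|\psi\|_2^2=\frac{1}{p^k}\sum_{x\in\inte_{p^k}}|\psi(x)|^2\le \frac{p^m}{p^k}\|\psi\|_\infty^2,
\]
which gives $\|\psi\|_\infty\ge p^{(k-m)/2}$. With the choice $m=\lceil k/2\rceil$ this exponent is $(k-\lceil k/2\rceil)/2=\lfloor k/2\rfloor/2=[k/2]/2$, yielding the claimed bound.

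There is essentially no obstacle beyond what Theorem~\ref{inv} already provides; the only small point to be careful about is that a Hecke eigenfunction really exists inside $S_k(m,k-m)$, i.e.\ that this subspace is non-zero and that the Hecke operators genuinely restrict to it. Non-triviality is immediate because $\dim S_k(m,k-m)=p^{2m-k}\ge 1$ when $m\ge k/2$, and the stability under the Hecke operators is just the application of Theorem~\ref{inv} to the specific $SL_2$-elements $xI+yA$. The support/Cauchy-Schwarz bound then does all the remaining work.
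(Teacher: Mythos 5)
Your argument is correct and is essentially identical to the paper's proof: you select the same invariant subspace (your $m=\lceil k/2\rceil$ equals the paper's $k-[k/2]$), invoke Theorem~\ref{inv} to get Hecke-invariance, diagonalise the commuting Hecke family to obtain a joint eigenfunction $\psi$ in that subspace, and then use the support bound $|\operatorname{supp}\psi|\le p^{m}$ with the $L^2$-normalisation to conclude. The extra observation that $\dim S_k(m,k-m)=p^{2m-k}\ge 1$ is a reasonable sanity check but adds nothing beyond what the paper already takes for granted.
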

\begin{proof}
The Hecke operators are of the form $U_{p^k}(B)$ for $B\in SL(2,\inte)$
where all $B$ commute. 
Since $S_k(k-[k/2],[k/2])$ is $SL(2,\inte )$-invariant there must be a 
joint eigenfunction $\psi$ of all $U_{p^k}(B)$ such that 
$\psi\in S_k(k-[k/2],[k/2]).$ 
If this function is normalized to have  $\| \psi \|_2=1,$ we get that 
$$\frac{p^{k-[k/2]}}{p^k}\| \psi \|_\infty^2 \ge\| \psi \|_2^2= 1$$
by the estimation $|\psi(x)|\le \| \psi \|_\infty$ on the support of $\psi.$  
\end{proof}

\begin{remark*}
When $k$ is even the space $S_k(k-[k/2],[k/2])=S_k(k/2,k/2)=\complex f$ where 
\begin{equation}
\label{storfunktion}
f(x)=\left\{  \begin {array}{ccc} 1 & \textrm{~~~~~~if $p^{k/2}|x$}\\
0 & \textrm{else} \end {array}\right.
\end{equation}
and we have $U_{p^k}(A)f=f$ for all $A\in SL(2, \inte ).$
\end{remark*}
The action of the Weil representation on $S_k(k-m,m)$ is isomorphic to the 
action on the full space, but for a different $N.$ More precisely,
let 
$T_m:S_k(k-m,m)\rightarrow L^2\left(\inte_{p^{k-2m}}\right)$ be defined by  
$(T_m\psi)(x)=p^{-m/2}\psi(p^mx),$ then $T_m$ is a bijective intertwining 
operator. In other words:
\begin{thm}
\label{iso}
Let $N=p^k,$ where $p$ is an odd prime. The operators $T_m$ are bijective and if 
$\psi\in S_k(k-m,m)$ we have that 
$U_{p^k}(A)\psi=T_m^{-1}U_{p^{k-2m}}(A)T_m\psi.$
\end{thm}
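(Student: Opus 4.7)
The plan is to split the proof into two parts: first show that $T_m$ is a linear bijection, and second verify the intertwining relation on a set of generators of $SL(2,\inte_{p^k})$.

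For bijectivity, I would observe that an element of $S_k(k-m,m)$ is completely determined by its values on the support points $p^m a$ with $a$ ranging in $\inte_{p^{k-2m}}$ (because the constancy condition modulo $p^{k-m}$ makes $\psi(p^m a)$ depend only on $a$ mod $p^{k-2m}$). Hence $\dim S_k(k-m,m)=p^{k-2m}=\dim L^2(\inte_{p^{k-2m}})$, and $T_m$ is obviously linear and injective. In fact, a short computation with the inner product $\langle\phi,\psi\rangle = \frac{1}{N}\sum \phi\overline{\psi}$ shows that $T_m$ is an isometry: the support contribution gives $\|\psi\|_2^2 = p^{m-k}\sum_{a\in\inte_{p^{k-2m}}}|\psi(p^m a)|^2$, which matches $\|T_m\psi\|_2^2 = p^{-(k-2m)}\sum_a p^{-m}|\psi(p^m a)|^2$.

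For the intertwining, it suffices to verify it on the generators $a_t$, $n_b$ and $\omega$. The cases of $n_b$ and $a_t$ are immediate after substitution: $T_m U_{p^k}(n_b)\psi(x) = p^{-m/2} e\bigl(rb(p^m x)^2/p^k\bigr)\psi(p^m x) = e(rbx^2/p^{k-2m})\, T_m\psi(x)$, which is $U_{p^{k-2m}}(n_b) T_m\psi(x)$; similarly $T_m U_{p^k}(a_t)\psi(x) = \Lambda(t)\cdot p^{-m/2}\psi(p^m t x) = \Lambda(t) T_m\psi(tx)$. The case of $\omega$ is essentially the computation already performed in the proof of Theorem~\ref{inv}: restricting the summation to the support $p^m z$, then splitting $z = a + p^{k-2m}b$ and summing over $b$, produces a factor $p^m$ together with the constraint $p^m \mid x$; plugging $x = p^m x'$ and applying $T_m$ yields the $\omega$-formula on $L^2(\inte_{p^{k-2m}})$ with prefactor $S_r(-1,p^k)/\sqrt{p^{k-2m}}$.

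The main obstacle, and the only genuinely arithmetic input, is therefore to check that the normalizing constants $\Lambda(t)$ and $S_r(-1,p^k)$ arising from the Weil representation on $\inte_{p^k}$ agree with those on $\inte_{p^{k-2m}}$. This is a standard property of these Gauss-sum factors: both are defined in terms of quadratic characters and Gauss sums which depend only on the class of the base prime $p$ (and of the unit $r$) and are invariant under multiplying the modulus by $p^{2}$, as can be seen by Hensel lifting the quadratic form $-x^2$ in the definition of $S_r$. Once this stability is verified, the computations above give the intertwining identity on each generator, and the theorem follows.
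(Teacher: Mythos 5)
Your proof is correct and follows essentially the same route as the paper: verify bijectivity and then check the intertwining on the generators $n_b$, $a_t$, $\omega$, with $\omega$ reducing to the same collapsing-the-sum computation. You are somewhat more explicit than the paper in two useful places — the dimension/isometry argument for bijectivity (which the paper dismisses as ``trivial'') and the observation that the only real arithmetic input is the stability of $\Lambda(t)$ and $S_r(-1,p^k)$ under $k\mapsto k-2m$ (which the paper uses silently when it replaces $S_r(-1,p^k)$ by $S_r(-1,p^{k-2m})$); both constants depend only on the parity of $k$, so this is indeed fine.
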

\begin{proof}
That $T_m$ is well defined and bijective is trivial. We are left with proving 
that the identity holds for the generators of $SL(2,\inte_{p^k}).$ This is 
immediate for $n_b$ and $a_t$ and for $\omega$ we have
\begin{align*}
(T_mU_{p^k}(\omega)\psi)(x)&=\frac{S_r\left(-1,p^k\right)}{\sqrt{p^{k+m}}}\sum_
{y\in\inte _{p^k}}\psi(y)e\left(\frac{2rxy}{p^{k-m}}\right)\\
&=\frac{S_r\left(-1,p^{k-2m}\right)}{\sqrt{p^{k-m}}}\sum_
{y\in\inte _{p^{k-m}}}\psi(y)e\left(\frac{2rxy}{p^{k-m}}\right)\\
&=\frac{S_r\left(-1,p^{k-2m}\right)}{\sqrt{p^{k-2m}}}\sum_
{y\in\inte _{p^{k-2m}}}p^{-m/2}\psi(p^my)e\left(\frac{2rxy}{p^{k-2m}}\right)\\
&=(U_{p^{k-2m}}(\omega)T_m\psi)(x).
\end{align*}
\end{proof}
\begin{remark*}
$T_m$ is in fact unitary.
\end{remark*}

One can obtain results analogous to Theorem~\ref{inv} and Theorem~\ref{SATSEN} 
for $p=2.$ 
\begin{thm}
\label{SATSEN2}
Let $p=2$ and $m\le k\le 2m+1.$ Then $S_k(m,k-1-m)$ is invariant under the 
action of $U_{2^k}.$
\end{thm}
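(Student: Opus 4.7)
The plan is to mimic the proof of Theorem~\ref{inv}, checking invariance on a set of generators of the subgroup of matrices $\equiv I\,(\mathrm{mod}\,4)$ in $SL(2,\inte_{2^k})$, and then bookkeeping carefully the extra factor of $2$ that appears in the $p=2$ formulas. Recall that this subgroup is generated by $n_b$ (with $4\mid b$), $n_c^T$ (with $4\mid c$) and $a_t$ (with $t\equiv 1\,(\mathrm{mod}\,4)$), and that $U_{2^k}(n_c^T)$ is defined as $H^{-1}U_{2^k}(n_{-c})H$ where $H$ is given by (\ref{3}) with $p=2$. So it suffices to prove invariance of $S_k(m,k-1-m)$ under $U_{2^k}(a_t)$, $U_{2^k}(n_b)$ and $H$; invariance under $H^{-1}$ then follows because $H$ restricted to the finite-dimensional space $S_k(m,k-1-m)$ is injective, hence bijective.

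The case of $a_t$ is immediate, since $t$ is a unit and so multiplication by $t$ preserves both the support condition $2^{k-1-m}\mid x$ and the constancy-mod-$2^m$ condition. For $n_b$ we need to show that the multiplier $e(rbx^2/2^k)$ is constant on cosets of $2^m\inte$ inside the support. Writing $y=x+2^m z$ with $2^{k-1-m}\mid x$, one has
$$y^2-x^2 = 2^{m+1}xz + 2^{2m}z^2,$$
so $rb(y^2-x^2)$ is divisible by $2^{2+(m+1)+(k-1-m)}=2^{k+2}$ in the first term (using $4\mid b$ and $2^{k-1-m}\mid x$) and by $2^{2+2m}$ in the second term (using $4\mid b$). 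The hypothesis $k\le 2m+1$ gives $k\le 2m+2$, so $2^k$ divides $rb(y^2-x^2)$ and the multiplier really is constant on the relevant cosets. The support condition is trivially preserved.

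The main work is to show that $H$ preserves $S_k(m,k-1-m)$, where the essential new feature compared to Theorem~\ref{inv} is that the phase is $2rxy/2^k = rxy/2^{k-1}$, i.e.\ there is one fewer power of 2 available in the denominator. Given $f\in S_k(m,k-1-m)$, write $y=2^{k-1-m}u$ with $u\in\inte_{2^{m+1}}$, and split $u = a + 2^{2m-k+1}b$ with $a\in\inte_{2^{2m-k+1}}$ and $b\in\inte_{2^{k-m}}$, so that $f(2^{k-1-m}u)$ depends only on $a$. Substituting as in the proof of Theorem~\ref{inv}, the phase becomes
$$\frac{2rxy}{2^k} = \frac{rxa}{2^m} + \frac{rxb}{2^{k-m-1}},$$
and the inner sum $\sum_{b\in\inte_{2^{k-m}}} e(rxb/2^{k-m-1})$ runs twice over a full period of length $2^{k-m-1}$, hence equals $2\cdot 2^{k-m-1}=2^{k-m}$ when $2^{k-m-1}\mid x$ and vanishes otherwise. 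So $Hf$ is supported on multiples of $2^{k-1-m}$, which is the correct support. On this support, only the $a$-sum remains, and a shift $x\mapsto x+2^m z$ changes $e(rxa/2^m)$ by $e(rza)=1$. Thus $Hf$ is constant on cosets of $2^m$ within its support, i.e.\ $Hf\in S_k(m,k-1-m)$, completing the proof.

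The main obstacle is precisely this last computation: one has to see that the extra factor of $2$ in the phase forces exactly the shifted support $2^{k-1-m}\mid x$ (rather than $2^{k-m}\mid x$ as in the odd case), and that this is compatible with the hypothesis $k\le 2m+1$ coming from the $n_b$-invariance step.
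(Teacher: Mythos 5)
Your proposal is correct and takes essentially the same approach as the paper, whose proof for this theorem is simply a reference back to Theorem~\ref{inv} (``do the same calculations''). You have fleshed out those calculations for $p=2$, correctly tracking the shift from $2^{k-m}$ to $2^{k-1-m}$ in the support condition (coming from the phase $2rxy/2^k=rxy/2^{k-1}$), using the restriction $4\mid b$ inherited from the $A\equiv I\pmod 4$ hypothesis to handle $U_{2^k}(n_b)$, and observing that invariance under $H$ plus finite-dimensionality yields invariance under $H^{-1}$, which is needed because $U_{2^k}(n_c^T)=H^{-1}U_{2^k}(n_{-c})H$.
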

\begin{proof}
Observe that we only need to show that $S_k(m,k-1-m)$ is preserved by (\ref{1}), 
(\ref{2}) and (\ref{3}) and do the same calculations as in the proof of 
Theorem~\ref{inv}.
\end{proof}
\begin{cor}
Assume that $N=ab^2,$ where $b$ is odd, or that $N=2ab^2$. 
Then, in both situations, there exists normalized Hecke eigenfunctions 
$\psi\in L^2\left(\inte _N\right)$ such that 
$$\| \psi \|_\infty \ge b^{1/2}.$$ 
\end{cor}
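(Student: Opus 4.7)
The plan is to build $\psi$ as a tensor product of local Hecke eigenfunctions, using the Chinese Remainder Theorem decomposition $L^2(\inte_N)=\bigotimes_p L^2(\inte_{p^{k_p}})$ with $k_p=v_p(N)$, and the corresponding factorization $U_N(A)=\bigotimes_p U_{p^{k_p}}(A)$ recalled in the introduction. Since $L^2$ and $L^\infty$ norms factor multiplicatively across a tensor product, and a tensor product of Hecke eigenfunctions is again a Hecke eigenfunction of $U_N(A)$, it is enough to exhibit at each prime $p$ a normalized Hecke eigenfunction $\psi_p\in L^2(\inte_{p^{k_p}})$ with $\|\psi_p\|_\infty\geq p^{v_p(b)/2}$.

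For odd primes $p$, Theorem~\ref{SATSEN} directly produces a normalized Hecke eigenfunction with $\|\psi_p\|_\infty\geq p^{\lfloor k_p/2\rfloor/2}$. Since $b^2\mid N$ in both cases of the corollary, $k_p\geq 2v_p(b)$, hence $\lfloor k_p/2\rfloor\geq v_p(b)$ and the local bound follows. (For odd $p\nmid b$ this reduces to $\|\psi_p\|_\infty\geq 1$, which any normalized joint eigenfunction of the finite-dimensional commutative Hecke algebra at the $p$-part satisfies trivially.)

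For $p=2$ in Case 1 one has $v_2(b)=0$, so again any normalized Hecke eigenfunction works. For Case 2 one mimics the proof of Theorem~\ref{SATSEN}, but using Theorem~\ref{SATSEN2} in place of Theorem~\ref{inv}: choose the smallest integer $m$ with $m\leq k_2\leq 2m+1$, apply Theorem~\ref{SATSEN2} to obtain a joint Hecke eigenfunction $\psi_2\in S_{k_2}(m,k_2-1-m)$, and combine the support estimate $|\operatorname{supp}(\psi_2)|\leq 2^{m+1}$ with $\|\psi_2\|_2=1$ to conclude $\|\psi_2\|_\infty\geq 2^{(k_2-m-1)/2}$. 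A brief case split on the parity of $k_2$, together with the inequality $k_2\geq 2v_2(b)+1$ (which holds because $2b^2\mid N$), shows that in every case this exponent is at least $v_2(b)/2$.

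Multiplying the local bounds yields $\|\psi\|_\infty=\prod_p\|\psi_p\|_\infty\geq\prod_p p^{v_p(b)/2}=b^{1/2}$ and $\|\psi\|_2=\prod_p\|\psi_p\|_2=1$, as required. The only genuinely new input is Case 2 at the prime $2$; the main obstacle is verifying that the parity constraint $k_2\leq 2m+1$ built into Theorem~\ref{SATSEN2} does not spoil the bound, but the extra factor of $2$ in $N=2ab^2$ is precisely what supplies the needed room in both parities of $k_2$.
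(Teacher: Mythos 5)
Your proposal is correct and follows the same route as the paper: decompose via CRT into a tensor product of local eigenfunctions and apply Theorem~\ref{SATSEN} at odd primes and Theorem~\ref{SATSEN2} at $p=2$, using $\|f\otimes g\|_\infty=\|f\|_\infty\|g\|_\infty$. The paper's proof is a one-liner; you have spelled out the only point that actually requires checking, namely the arithmetic at $p=2$ showing why the hypothesis ``$b$ odd or $N=2ab^2$'' is exactly what compensates for the shift $k-1-m$ (rather than $k-m$) in Theorem~\ref{SATSEN2}.
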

\begin{proof}
Follows immediately from Theorem \ref{SATSEN} and Theorem \ref{SATSEN2} since 
$\|f\otimes g\|_\infty=\|f\|_\infty\|g\|_\infty.$
\end{proof}

\section{{\large Shannon entropies of Hecke functions}}

Entropy is a classical measure of uncertainty (chaos) in a dynamical system and 
recently this has been studied in a number of papers in the context of 
quantum chaos, see \cite{Ana},\cite{Ana2}. 
The main conjecture can intuitively be described in the following way: 
The entropy is always at least half of the largest possible entropy. 

\begin{define}
Let $f\in L^2\left(\inte_N\right)$ and assume $\|f\|_2=1.$ We define the 
Shannon entropy to be 
$$h(f)=-\sum_{x\in\inte_N}\frac{|f(x)|^2}{N}\log{\frac{|f(x)|^2}{N}}.$$
\end{define}

In \cite{Ana}, Anantharaman and Nonnenmacher prove the described 
conjecture in the case of semiclassical limits of the Walsh-quantized baker's 
map with $N=D^k$ and $D$ fixed. 
In the course of this proof they come across similar inequalities for 
the Shannon entropy of the specific eigenstates. The maximal entropy is 
$|\log {2\pi \hslash }|$ (where $\hslash$ is Planck's constant) and they show 
that each eigenstate $\psi_\hslash$ fulfills 
$h(\psi_\hslash )\ge 1/2 |\log {2\pi \hslash }|$. In our case $2\pi \hslash$ is 
equivalent to $N^{-1}$ and therefore a natural question for cat maps is if
\begin{equation}
\label{ent}
h(\psi)\ge \frac{1}{2}\log{N}
\end{equation}
for Hecke eigenfunctions, or more generally, for eigenfunctions of $U_N(A).$
Let us first note that if we for instance take $N$ to be prime and put $A=n_b$ 
for some $b\varnotsign= 0$ then the function 
$$f(x)=\left\{  \begin {array}{ccc} \sqrt{N} & \textrm{~~~~~~if $x=0$}\\
0 & \textrm{else} \end {array}\right.$$
fulfill $U_N(A)f=f$ and $h(f)=0,$ hence the inequality in (\ref{ent}) 
can not be true in full generality. However the following is true:

\begin{thm}
\label{entropi}
Assume that $A$ is not upper triangular modulo $p$ for any $p|N.$ 
If $f\in L^2\left(\inte_N\right)$ is a normalized eigenfunction of $U_N(A)$ 
then $h(f)\ge \frac{1}{2}\log{N}.$  
\end{thm}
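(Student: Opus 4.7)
The plan is to reduce the inequality to the Maassen--Uffink entropic uncertainty principle applied to the Fourier-like operator $U_N(\omega)$. Writing $A=\begin{pmatrix}a&b\\c&d\end{pmatrix}$, the hypothesis that $A$ is not upper triangular modulo any prime $p\mid N$ means $c$ is a unit in $\inte_N$ (note that the standing congruence conditions on $A$ already force $c$ to be even, so necessarily $N$ is odd here). A direct verification gives the Bruhat-type identity
\[
A \;=\; n_{ac}\,a_{-c^{-1}}\,\omega\,n_{d/c}
\]
in $SL(2,\inte_N)$, and since the Weil representation is multiplicative up to unimodular scalars this lifts to
\[
U_N(A) \;=\; \mu\,U_N(n_{ac})\,U_N(a_{-c^{-1}})\,U_N(\omega)\,U_N(n_{d/c}),\qquad |\mu|=1.
\]

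The key observation is that the operators $U_N(n_b)$ and $U_N(a_t)$ never alter the Shannon entropy. Indeed, from (\ref{1}), $U_N(n_b)$ is pointwise multiplication by a unimodular function, so $|U_N(n_b)\psi|\equiv|\psi|$ and $h(U_N(n_b)\psi)=h(\psi)$; from (\ref{2}), $U_N(a_t)\psi(x)=\Lambda(t)\psi(tx)$ with $|\Lambda(t)|=1$ and $t$ a unit, so $|U_N(a_t)\psi|^2$ is merely a permutation of $|\psi|^2$ and again $h(U_N(a_t)\psi)=h(\psi)$. These statements extend to composite $N$ via the CRT tensor decomposition. Thus if $f$ is a normalized eigenfunction of $U_N(A)$, setting $g := U_N(n_{d/c})f$ gives $h(g)=h(f)$, and the relation $U_N(A)f = \lambda f$ (with $|\lambda|=1$) together with the displayed factorization gives $h(U_N(\omega)g) = h(f)$ as well.

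It then remains to bound $h(g)+h(U_N(\omega)g)$ from below. Let $e_y$ denote the unit vector equal to $\sqrt{N}$ at $y$ and zero elsewhere, so that $\{e_y\}_y$ is an orthonormal basis of $L^2(\inte_N)$. Equation (\ref{3}) together with the CRT tensor-product definition of $U_N(\omega)$ shows that every matrix entry $\langle e_x,U_N(\omega)e_y\rangle$ has modulus exactly $1/\sqrt{N}$. Hence the Maassen--Uffink inequality, applied to the bases $\{e_y\}$ and $\{U_N(\omega)^{*}e_y\}$, yields
\[
h(g)+h(U_N(\omega)g)\;\ge\;-2\log\tfrac{1}{\sqrt{N}}\;=\;\log N
\]
for every normalized $g$. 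Combined with the two equalities $h(g)=h(U_N(\omega)g)=h(f)$ from the previous paragraph, this gives $2\,h(f)\ge\log N$, as required.

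The main obstacles are bookkeeping rather than ingenuity: one must verify that the Bruhat identity lifts to a factorization of $U_N(A)$ modulo a unimodular scalar, and that the matrix of $U_N(\omega)$ in the normalized delta basis indeed has all entries of modulus $1/\sqrt{N}$ after the CRT tensor product. Both steps follow directly from (\ref{3}) once $N$ is known to be odd. As a sanity check, the bound $\tfrac12\log N$ is sharp: when $N=p^k$ with $k$ even, the indicator function in (\ref{storfunktion}) is supported on $p^{k/2}$ points with constant modulus, and a direct calculation gives $h(f)=\tfrac12\log N$, realising equality for the eigenfunctions constructed in Theorem~\ref{SATSEN}.
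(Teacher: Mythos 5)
Your proof is correct and uses essentially the same approach as the paper: a Bruhat-type factorization of $A$ reduces matters to $U_N(\omega)$, whose matrix has all entries of modulus $N^{-1/2}$, so the entropic uncertainty principle yields $2h(f)\ge\log N$ (the paper computes the full matrix of $U_N(A)$ directly rather than peeling off the entropy-preserving factors, but the idea is identical). Minor typo: the factorization should read $A=n_{a/c}\,a_{-c^{-1}}\,\omega\,n_{d/c}$ rather than $n_{ac}\,a_{-c^{-1}}\,\omega\,n_{d/c}$; this has no effect on the argument since the specific parameter values are never used.
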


If $f$ is the function defined in (\ref{storfunktion}) then $N^{1/4}f=p^{k/4}f$ 
fulfills $h(f)=1/2\log N,$ hence the inequality in Theorem~\ref{entropi} 
is sharp. The proof of the Theorem is a simple application of the following 
Entropic Uncertainty Principle which can be found in \cite{Ana}:

\begin{thm}{\bf Entropic Uncertainty Principle }
Let $N$ be a positive integer and let $U$ be a unitary $N\times N$ matrix. 
If we denote $c(U)= \max |U_{i,j}|,$ then 
$$h(f)+h(Uf)\ge -2\log{c(U)}$$
for all $f\in L^2 \left(\inte_N\right)$ with $\|f\|_2=1.$
\end{thm}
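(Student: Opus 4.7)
The statement is the Maassen--Uffink entropic uncertainty relation, and I would prove it by the standard Riesz--Thorin interpolation argument. The first move is a cosmetic rescaling $\tilde f_j := f_j/\sqrt N$, which converts the paper's $L^2$-normalization into the standard $\ell^2$ one. Then $(|\tilde f_j|^2)_j$ is an honest probability distribution and $h(f) = -\sum_j |\tilde f_j|^2 \log |\tilde f_j|^2$. The matrix of $U$ in the standard basis is unchanged by the rescaling, so $c(U)$ is preserved and $\widetilde{Uf} = U \tilde f$.

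The analytic core is to control $U$ as an operator between $\ell^p$ spaces. Unitarity gives $U:\ell^2 \to \ell^2$ with norm $1$, while the pointwise bound $|(Ug)_i| \le c(U) \sum_j |g_j|$ gives $U:\ell^1 \to \ell^\infty$ with norm at most $c(U)$. Riesz--Thorin interpolation then yields, for every $1 \le p \le 2$ and conjugate exponent $p' = p/(p-1)$,
\[
\|Ug\|_{p'} \;\le\; c(U)^{2/p - 1}\, \|g\|_p .
\]

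I would apply this with $g = \tilde f$ and introduce
\[
G(p) \;:=\; \log \|U\tilde f\|_{p'} \;-\; \log \|\tilde f\|_p \;-\; \Big(\tfrac{2}{p} - 1\Big)\log c(U) .
\]
Then $G(p) \le 0$ on $[1,2]$ and $G(2) = 0$, since $\|\tilde f\|_2 = \|U\tilde f\|_2 = 1$ and the coefficient $2/p - 1$ vanishes at $p=2$. Hence the left derivative $G'(2^-) \ge 0$. A direct computation based on
\[
\frac{d}{dp} \log \|g\|_p \bigg|_{p=2} \;=\; \tfrac{1}{4} \sum_j |g_j|^2 \log |g_j|^2
\qquad \text{whenever } \|g\|_2 = 1 ,
\]
together with the chain rule $dp'/dp|_{p=2} = -1$, yields
\[
G'(2) \;=\; \tfrac{1}{4}\, h(Uf) \;+\; \tfrac{1}{4}\, h(f) \;+\; \tfrac{1}{2}\, \log c(U) \;\ge\; 0 ,
\]
which rearranges to the claim $h(f) + h(Uf) \ge -2 \log c(U)$.

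The only substantive ingredient is the Riesz--Thorin step; the rest is a mechanical differentiation. Conceptually, as $p$ sweeps from $1$ to $2$, $G(p)$ interpolates between the trivial bound $\|U\tilde f\|_\infty \le c(U)\|\tilde f\|_1$ at $p=1$ and the equality $\|U\tilde f\|_2 = \|\tilde f\|_2$ at $p=2$; interpolation keeps $G$ nonpositive throughout, and the entropy inequality is precisely the first-order consequence of this at the right endpoint.
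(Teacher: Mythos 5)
Your proof is correct, but note that the paper does not actually supply its own proof of this statement: it is quoted as a black box with a citation to Anantharaman--Nonnenmacher \cite{Ana}, who in turn invoke the Maassen--Uffink entropic uncertainty relation. Your argument is precisely the standard Maassen--Uffink proof: interpolate between the trivial $\ell^1\to\ell^\infty$ bound $\|Ug\|_\infty\le c(U)\|g\|_1$ and the unitarity bound $\|Ug\|_2=\|g\|_2$ via Riesz--Thorin to get $\|U\|_{p\to p'}\le c(U)^{2/p-1}$, observe that the resulting function $G(p)\le 0$ with $G(2)=0$, and differentiate at the endpoint. The preliminary rescaling $\tilde f_j=f_j/\sqrt N$ is the right way to reconcile the paper's normalized inner product $\langle\phi,\psi\rangle=\tfrac1N\sum\phi\overline\psi$ with the usual $\ell^p$ framework; it correctly turns $(|\tilde f_j|^2)$ into a probability vector, leaves the matrix $(U_{ij})$ and hence $c(U)$ untouched, and makes $h(f)=-\sum_j|\tilde f_j|^2\log|\tilde f_j|^2$. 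The derivative computation $\tfrac{d}{dp}\log\|g\|_p\big|_{p=2}=\tfrac14\sum_j|g_j|^2\log|g_j|^2$ (with the convention $0\log 0=0$, which also keeps $F(p)=\sum_j|g_j|^p$ differentiable when some $g_j=0$) and the chain-rule factor $dp'/dp\big|_{p=2}=-1$ are both right, and the conclusion $G'(2^-)\ge 0$ is legitimate since $G$ is nonpositive on $[1,2]$ and vanishes at the right endpoint. So your proposal reproduces, rather than deviates from, the proof behind the cited reference; there is no alternative argument in the paper to compare it against.
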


\begin{proof}[Proof of Theorem \ref{entropi}]
It is enough to prove the statement for $N=p^k.$ 
Let $A=\left(\begin {array}{ccc} a & b\\
\noalign{\medskip} c & d\end {array} \right).$ Since $p\notdivides c$ we can 
write $A=n_{b_1}\omega n_{b_2} a_t$ where $t=-c,$ $b_1=ac^{-1}$ and $b_2=cd.$
Inserting this into the definition of $U_N$ we get 
$$U_N(A)\psi(x)=\frac{S_r(-1,N)}{\sqrt{N}}\Lambda(t)\sum_{y\in\inte_N}
e\left(\frac{r\left( b_1x^2+b_2y^2+2xy)\right)}{N} \right)\psi(ty).$$
Hence, if we view $U_N(A)$ as an $N\times N$ matrix then all its entries have 
absolute value $N^{-1/2}$ and thus if $U_N(A)f=\lambda f$ then the 
Entropic Uncertainty Principle says that
$h(f)=h(U_N(A)f)\ge -\log N^{-1/2}=1/2\log N.$  
\end{proof}

Note that the function defined in (\ref{storfunktion}) is invariant under the 
action of $SL(2,\inte_N)$ and in particular if we apply the Fourier transform to 
it. Thus the Shannon entropy of the state is trivially the same in both the
``position''-representation and the ``momentum''-representation. This property 
was also observed by Anantharaman and Nonnenmacher in their study of the 
baker's map, however there is a big difference between the two quantizations. 
The baker's map is quantized in a manner where different states correspond to 
different functions of phase space (the torus) and in this context it is natural
to study the so called Wehrl entropy of the state \cite{Weh}.
They prove that the Wehrl 
entropy coincides with the Shannon entropy. Our states are elements in 
the state space $L^2(\inte_N)$ and in our quantization 
$g \mapsto <Op(g)\psi,\psi>$ (see \cite{KR2} for the definition of $Op$) 
is a signed measure, but does not induce a density on the phase space. 
If we in particular pick $\psi$ to be the function in (\ref{storfunktion}), then 
for all $g\in C^\infty (\mathbb{T}^2)$ we have that
$$<Op(g)\psi,\psi>=\int_{\mathbb{T}^2}w(x)g(x)dx,$$
where $w(x)$ is the $p^{-k/2}-$periodic extension of 
$$w(x)=\frac{1}{2p^k}\left(\delta_{0,0}+\delta_{p^{-k/2}/2,0}
+\delta_{0,p^{-k/2}/2}-\delta_{p^{-k/2}/2,p^{-k/2}/2}\right).$$
This can be seen using Poisson's summation formula or by straightforward 
identification of Fourier coefficients.  
There is however a problem with the large class $C^\infty (\mathbb{T}^2)$ 
of observables (we want to think of $w(x)$ as a positive function, 
but obviously it is not) 
seen also from a physicists perspective; 
the Heisenberg uncertainty principle says that we cannot 
measure exact points (i.e exact position and exact momentum) 
in phase space, thus our observables should not 
behave to badly on a local scale. One naive way to cope with this problem would 
be to study trigonometric polynomials as observables and let the number of
terms in the trigonometric expansion to grow with $N.$ This solves our problems 
and makes it possible for us to approximate our sum of delta functions 
by a trigonometric polynomial.
To be more precise: Let $\Omega\subset \inte^2$ be a bounded set and let
$T(\Omega)=\left\{f(x)=\sum_{n\in\Omega}c_ne(n\cdot x);c_n\in\C\right\}.$ 
Then for $\psi$ given by (\ref{storfunktion}) we have
$$<Op(g)\psi,\psi>=\int_{\mathbb{T}^2}\tilde{w}(x)g(x)dx,$$
for $g\in T(\Omega)$ and with 
$\tilde{w}(x)=\sum_{n\in \Omega\cap p^{k/2}\inte^2} 
(-1)^{n_1n_2}e(n\cdot x).$ 
In particular if we let $\Omega$ be a disc of radius square root of the 
inverse of Planck's constant, i.e. $\Omega=\{x\in \inte^2; |x|<p^{k/2}\},$ 
we have $\tilde{w}(x)=1.$ Note that the Wehrl entropy of $\tilde{w}(x)=1$ 
is maximal ($\log{p^k}$), but that the Shannon entropy of $\psi$ is minimal.

\section{{\large Evaluation of Hecke eigenfunctions}}
The rest of the paper is devoted to the study of Hecke eigenfunctions in the 
orthogonal complement of $S_k(k-1,1).$ In view of Theorem~\ref{iso} this is no 
restriction, but rather a natural way to structure the theory.
To get an easy description of the 
dynamics we will make the assumption that $N=p^{2k}$ where $p$ is a prime larger 
than $3.$ The fact that the
dynamics seems to be easier to describe if $N$ is assumed to be a perfect square,
has been observed before by Knabe \cite{Kna}. 
Although his quantization is different, the description of the dynamics 
is quite similar. We begin the study by some basic definitions:

\begin{define}
For $x\in \inte_N,$ let $\delta_x:\inte_N\rightarrow \C$ be the function which is
$1$ at $x$ and $0$ at every other point.  
\end{define}

\begin{define}
Given $x=\left(\begin {array}{c} x_1\\x_2\end {array}\right)\in\inte_N^2,$ 
let $\zeta_x:\inte_N\rightarrow \C$ be defined by
$$\zeta_x=\sum_{t\in\inte_{p^k}}e\left(\frac{x_1t}{p^k}\right)
\delta_{x_2+p^kt}.$$
\end{define}

\begin{remark*}
Notice that $\{\zeta_x;x\in\{1,2,...,p^k\}^2 \}$ is an orthogonal base of 
$L^2(\inte_N)$ and that $x\equiv x'~\pmod {p^k}$ implies $\zeta_x=c\zeta_{x'}$ 
for some number $c$ such that $c^{p^k}=1.$ In particular the space $\C \zeta_x$ 
can be thought of as defined for $x\in \inte_{p^k}^2.$
\end{remark*}

\begin{define}
Given $D\in \inte_N$ we let $$H_D=\left\{ \left(\begin {array}{ccc} a & bD\\
\noalign{\medskip} b & a\end {array} \right) ; 
a,b\in\inte_N~,~a^2-Db^2=1 \right\}.$$
\end{define}
We make the assumption that 
$A$ is not upper triangular modulo $p.$ Because of this assumption 
$A$ can be written as
$A=n_{b}hn_{-b}$ for some $b,D$ and some $h\in H_D$ and so we see that 
the Hecke operators are given by $\{ U_N(g);g\in n_{b}H_Dn_{-b}\}.$
But if $\psi$ is an eigenfunction of $U_N(h),$ then 
$\widetilde\psi=U_N(n_b)\psi$ is an eigenfunction of $U_N(n_bhn_{-b})$ and 
furthermore $|\psi(x)|=|\widetilde\psi(x)|.$ Thus
we may assume that the Hecke operators are $\{ U_N(h);h\in H_D\}.$ If $D$ is a 
quadratic residue modulo $p$ then $p$ is called split, if $D$ is not a 
quadratic residue modulo $p$ then $p$ is called inert, and if $p|D$ then $p$ is 
called ramified.

\begin{define}
Let $\mathscr{N}: \inte_{p^k}^2\rightarrow \inte_{p^k}$ be defined by 
$\mathscr{N}(x)=x_1^2-Dx_2^2.$ 
\end{define}

\begin{define}
For $C\in\inte_{p^k}$ we define 
$$V_C=\bigoplus_{\begin{subarray}{c} x\in \inte_{p^k}^2\\ 
\mathscr{N}(x)=-C\end{subarray}}\C\zeta_x.$$
\end{define}
\begin{remark*}
Note that $S_{2k}(2k-1,1)=\bigoplus_{p|x}\C \zeta_x\subseteq 
\bigoplus_{p|C}V_C$ and that the latter is an equality if $p$ is inert.
\end{remark*}

\begin{lemma}
\label{dynamik}
Assume $B\in SL(2,\inte_N)$ and that $x'=Bx.$ We have that 
$$U_N\left(B\right)\zeta_x
=e\left(\frac{r(x_1'x_2'-x_1x_2)}{N}\right)\zeta_{x'}.$$
\end{lemma}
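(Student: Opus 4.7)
The plan is to verify the identity on a generating set of $SL(2,\inte_N)$ and then extend it to all of $SL(2,\inte_N)$ by a cocycle argument. Setting $\phi(B,x):=r(x_1'x_2'-x_1x_2)/N$ with $x'=Bx$, the quantity $\phi$ telescopes: if $x''=B_2x$ and $x'=B_1x''$, then
\[
\phi(B_2,x)+\phi(B_1,x'')=\frac{r(x_1''x_2''-x_1x_2)}{N}+\frac{r(x_1'x_2'-x_1''x_2'')}{N}=\phi(B_1B_2,x).
\]
Consequently, if the formula holds for $B_1$ and for $B_2$ separately, it holds for $B_1B_2$. Since $SL(2,\inte_N)$ is generated by $n_b$, $a_t$, $\omega$, only these three cases need be verified.

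For $B=n_b$ we have $x'=(x_1+bx_2,x_2)$ and $x_1'x_2'-x_1x_2=bx_2^2$. I would evaluate $U_N(n_b)\zeta_x$ at the support point $y=x_2+p^k t$ using (\ref{1}) and expand $(x_2+p^kt)^2=x_2^2+2p^kx_2t+p^{2k}t^2$. The $p^{2k}t^2$ term vanishes modulo $N$; the cross term combines with the original phase $x_1t/p^k$ from $\zeta_x$ (using the normalization $2r\equiv 1\pmod{p^k}$ inherent in the paper's Weil representation conventions) to produce exactly the phase $(x_1+bx_2)t/p^k$ of $\zeta_{x'}$, while the remaining constant $e(rbx_2^2/N)$ matches the claimed prefactor. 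For $B=a_t$ the claim is that $x_1'x_2'=x_1x_2$, and the change of variable $y=t^{-1}(x_2+p^ks)$ immediately gives $\zeta_x(ty)=\zeta_{x'}(y)$.

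For $B=\omega$ we have $x'=(x_2,-x_1)$. Applying (\ref{3}) and restricting the sum to the support of $\zeta_x$, then swapping the order of summation, yields
\[
U_N(\omega)\zeta_x(z)=\frac{S_r(-1,p^{2k})}{p^k}\,e\!\left(\frac{2rzx_2}{N}\right)\sum_{t\in\inte_{p^k}}e\!\left(\frac{(x_1+2rz)t}{p^k}\right),
\]
where the inner $t$-sum is $p^k$ or $0$ depending on whether $p^k\mid x_1+2rz$. Using $2r\equiv 1\pmod{p^k}$, the nonvanishing condition is $z\equiv -x_1\pmod{p^k}$, which is exactly the support of $\zeta_{x'}$. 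Writing $z=-x_1+p^ks$ and simplifying the remaining exponential extracts the factor $e(-x_1x_2/N)=e(r(x_1'x_2'-x_1x_2)/N)$ multiplying $\zeta_{x'}(z)$.

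The main obstacle is cleanly dispatching the residual scalars $\Lambda(t)$ and $S_r(-1,p^{2k})$ that appear when checking the $a_t$ and $\omega$ cases; these must equal $1$ for the formula to hold on the nose. One route is to invoke the explicit values in \cite{KR2}; alternatively, any leftover factor $c(B)$ extends by the same cocycle argument above to a character $SL(2,\inte_N)\to\C^\times$, which is forced to be trivial since this group is perfect for $p>3$.
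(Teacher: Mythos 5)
Your proposal is correct and follows essentially the same route as the paper: verify the identity on the generators $n_b$, $a_t$, $\omega$ (using $2r\equiv 1\pmod N$ and $\Lambda(t)=S_r(-1,p^{2k})=1$ from \cite{KR2}) and propagate to all of $SL(2,\inte_N)$ by the multiplicativity of both sides, which is exactly your cocycle identity for $\phi$. The one genuine addition in your write-up — that any residual scalar would define a character on $SL(2,\inte_N)$, which is trivial since the group is perfect for $p>3$ — is a pleasant alternative to citing the explicit values, but it is not needed once the \cite{KR2} evaluation is invoked, as the paper does.
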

\begin{proof}
By the multiplicativity of both sides of the equality it is enough to prove the 
lemma for the generators of $SL(2,\inte_N).$ Since $N=p^{2k}$ 
we have that $\Lambda(t)=S_r\left(-1,p^{2k}\right)=1$ and 
$2r\equiv 1~~(\mbox{mod}~N)$ (see \cite{KR2}). 
Using the definition of $U_N$ we get
\begin{align*}
U_N\left(n_b\right)\zeta_x&=
\sum_{t\in\inte_{p^k}}e\left(\frac{x_1t}{p^k}\right)
e\left(\frac{rb\left(x_2+p^kt \right)^2}{N}\right)\delta_{x_2+p^kt}\\
&=e\left(\frac{rbx_2^2}{N}\right)
\sum_{t\in\inte_{p^k}}e\left(\frac{(x_1+bx_2)t}{p^k}\right)\delta_{x_2+p^kt}
=e\left(\frac{r(x_1'x_2'-x_1x_2)}{N}\right)\zeta_{x'},\\
U_N\left(a_s\right)\zeta_x&=
\sum_{t\in\inte_{p^k}}e\left(\frac{x_1t}{p^k}\right)
\delta_{s^{-1}(x_2+p^kt)}
=\sum_{t\in\inte_{p^k}}e\left(\frac{sx_1t}{p^k}\right)
\delta_{s^{-1}x_2+p^kt}\\
&=e\left(\frac{r(x_1'x_2'-x_1x_2)}{N}\right)\zeta_{x'}
\end{align*}
and
\begin{align*}
U_N\left(\omega\right)\zeta_x 
&=\sum_{t\in\inte_{p^k}}e\left(\frac{x_1t}{p^k}\right)
\frac{1}{p^{k}}\sum_{z\in \inte_{N}}\delta_{x_2+p^kt}(z)
e\left(\frac{2ryz}{N}\right)\\
&=\frac{e\left(\frac{x_2y}{N}\right)}{p^{k}}\sum_{t\in\inte_{p^k}}
e\left(\frac{(x_1+y)t}{p^k}\right)
=e\left(\frac{x_2y}{N}\right)\sum_{t\in\inte_{p^k}}\delta_{-x_1+p^kt}\\
&=\sum_{t\in\inte_{p^k}}e\left(\frac{x_2(-x_1+p^kt)}{N}\right)\delta_{-x_1+p^kt}
=e\left(\frac{r(x_1'x_2'-x_1x_2)}{N}\right)\zeta_{x'}.
\end{align*}
\end{proof}

As a special case of Lemma~\ref{dynamik} we get the following corollary:
\begin{cor}
\label{egenfunktioner}
We have that
$$U_N\left(\begin {array}{ccc} 1 & tp^kD\\
\noalign{\medskip} tp^k & 1\end {array} \right)\zeta_x
=e\left(-\frac{r\mathscr{N}(x)t}{p^k}\right)\zeta_x.$$
\end{cor}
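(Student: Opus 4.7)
The plan is to derive the corollary as a direct special case of Lemma~\ref{dynamik} applied to $B=\begin{pmatrix} 1 & tp^kD\\ tp^k & 1\end{pmatrix}$. First I would verify $B\in SL(2,\inte_N)$: its determinant is $1-t^2p^{2k}D\equiv 1\pmod{N}$ since $N=p^{2k}$. Then I would compute $x'=Bx$, namely $x_1'=x_1+tp^kDx_2$ and $x_2'=tp^kx_1+x_2$, and observe that $x'\equiv x\pmod{p^k}$. By the remark following the definition of $\zeta_x$, this means $\zeta_{x'}$ and $\zeta_x$ differ only by a phase, so Lemma~\ref{dynamik} immediately produces an equation of the form $U_N(B)\zeta_x=(\text{phase})\cdot\zeta_x$. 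The bulk of the work is to show that the phase simplifies to $e\left(-\frac{r\mathscr{N}(x)t}{p^k}\right)$.

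The phase contributed by Lemma~\ref{dynamik} is $e\left(\frac{r(x_1'x_2'-x_1x_2)}{N}\right)$. Expanding and reducing modulo $p^{2k}$, the cross term $t^2p^{2k}Dx_1x_2$ vanishes, leaving $x_1'x_2'-x_1x_2\equiv tp^k(x_1^2+Dx_2^2)\pmod{p^{2k}}$. So the Lemma gives the factor $e\!\left(\frac{rt(x_1^2+Dx_2^2)}{p^k}\right)$. This is \emph{not} yet $e\!\left(-\frac{r\mathscr{N}(x)t}{p^k}\right)=e\!\left(\frac{rt(Dx_2^2-x_1^2)}{p^k}\right)$; the discrepancy is the extra piece $e\!\left(\frac{tx_1^2}{p^k}\right)$, using $2r\equiv 1\pmod{N}$.

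The missing factor must come from rewriting $\zeta_{x'}$ in terms of $\zeta_x$. I would carry out this bookkeeping by the change of summation variable $s'=s+tx_1$ in
\[
\zeta_{x'}=\sum_{s\in\inte_{p^k}}e\!\left(\frac{x_1's}{p^k}\right)\delta_{x_2'+p^ks}
=\sum_{s\in\inte_{p^k}}e\!\left(\frac{x_1's}{p^k}\right)\delta_{x_2+p^k(tx_1+s)},
\]
which, after using $x_1'\equiv x_1\pmod{p^k}$, gives $\zeta_{x'}=e\!\left(-\frac{tx_1^2}{p^k}\right)\zeta_x$. Multiplying the two phases and invoking $2r-1\equiv 0\pmod{p^{2k}}$ to convert $x_1^2+Dx_2^2$ into $-(x_1^2-Dx_2^2)$ modulo $p^k$ in the combined exponent produces exactly $e\!\left(-\frac{r\mathscr{N}(x)t}{p^k}\right)\zeta_x$.

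The only subtle point — and the one I would take care to get right — is the identification $\zeta_{x'}=e\!\left(-\frac{tx_1^2}{p^k}\right)\zeta_x$: both the shift $x_2'-x_2=tp^kx_1$ and the congruence $x_1'\equiv x_1\pmod{p^k}$ enter, and one must check that the final reconciliation uses $2r\equiv 1\pmod{N}$ at the right level (modulo $p^k$, not just modulo $N$). Everything else is a routine matrix multiplication and exponent manipulation.
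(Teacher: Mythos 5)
Your proposal is correct and follows essentially the same route as the paper's proof: apply Lemma~\ref{dynamik}, reduce $x_1'x_2'-x_1x_2 \equiv tp^k(x_1^2+Dx_2^2)\pmod{p^{2k}}$, reindex the sum defining $\zeta_{x'}$ by $s\mapsto s-tx_1$ to extract the phase $e(-tx_1^2/p^k)$ relating $\zeta_{x'}$ to $\zeta_x$, and then combine the two exponents using $2r\equiv 1\pmod{N}$ to recover $-r\mathscr{N}(x)t$. The paper carries this out in a single compressed chain of equalities rather than explicitly isolating the factor $\zeta_{x'}=e(-tx_1^2/p^k)\zeta_x$, but the underlying computation and the use of $2r\equiv 1$ are identical.
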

\begin{proof}
With $x'=Bx$ we have 
\begin{align*}
U_N&\left(\begin {array}{ccc} 1 & tp^kD\\
\noalign{\medskip} tp^k & 1\end {array} \right)\zeta_x=
e\left(\frac{r(x_1'x_2'-x_1x_2)}{N}\right)\zeta_{x'}\\
&=e\left(\frac{r\left(x_1^2+Dx_2^2\right)t}{p^k}\right)\sum_{s\in\inte_{p^k}}
e\left(\frac{\left(x_1+tp^kDx_2\right)s}{p^k}\right)\delta_{x_2+tp^kx_1+sp^k}\\
&=e\left(\frac{r\left(x_1^2+Dx_2^2\right)t}{p^k}\right)\sum_{s\in\inte_{p^k}}
e\left(\frac{x_1\left(s-tx_1\right)}{p^k}\right)\delta_{x_2+sp^k}
=e\left(-\frac{r\mathscr{N}(x)t}{p^k}\right)\zeta_x.
\end{align*}
\end{proof}
Since $H_D$ preserves $\mathscr{N}(x)$ we see that $V_C$ are natural subspaces 
to study. Pick $x_0\in \inte_{p^k}^2$ with $\mathscr{N}(x_0)=-C$ such that 
$p\notdivides C.$ We see that $H_Dx_0=\{x\in\inte_{p^k}^2;\mathscr{N}(x)=-C\},$ 
hence if a 
Hecke eigenfunction has a nonzero coefficient for some $\zeta_x\in V_C,$ then it 
has nonzero coefficients for all $\zeta_x\in V_C$ and they have the same 
absolute value. On the other hand, if $\psi$ is a Hecke eigenfunction, then 
Corollary~\ref{egenfunktioner} tells us that $\psi\in V_C$ for some 
$C\in\inte_{p^k}.$ 
If $p|C$ the orbit $H_Dx_0$ is not always as large. This corresponds to 
the fact that the irreducible representations in $V_C$ are no longer one 
dimensional.
\begin{lemma}
\label{dim}
If $p$ does not divide $C$ or $D$ then
$\dim(V_C)=p^k-\left(\frac{D}{p}\right)p^{k-1}.$
\end{lemma}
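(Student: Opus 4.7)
The plan is to reinterpret $\dim(V_C)$ as a point count on a quadric in $\inte_{p^k}^2$, evaluate this count modulo $p$ directly, and then pass to mod $p^k$ via Hensel's lemma. From the definition of $V_C$ (and the remark that $\C\zeta_x$ is well-defined for $x \in \inte_{p^k}^2$), one has
\[
\dim(V_C) \;=\; N_k \;:=\; \#\{\,x \in \inte_{p^k}^2 : x_1^2 - Dx_2^2 \equiv -C \pmod{p^k}\,\}.
\]
So it suffices to show $N_k = p^k - \left(\frac{D}{p}\right)p^{k-1}$.

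First I would evaluate $N_1$ by splitting on whether $p$ is split or inert. If $D \equiv d^2 \pmod p$ for some $d \in \mathbb{F}_p^*$, the substitution $u = x_1+dx_2$, $v = x_1-dx_2$ is a bijection of $\mathbb{F}_p^2$ (since $p$ is odd) turning the equation into $uv = -C$; because $-C$ is a unit, $u$ can be any element of $\mathbb{F}_p^*$ and $v$ is then determined, giving $N_1 = p-1$. If instead $D$ is a non-residue, then $\mathscr{N}$ is the norm form of the quadratic extension $\mathbb{F}_{p^2}/\mathbb{F}_p$; the norm map $\mathbb{F}_{p^2}^* \to \mathbb{F}_p^*$ is surjective with kernel of size $(p^2-1)/(p-1) = p+1$, so $N_1 = p+1$. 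Both cases combine into $N_1 = p - \left(\frac{D}{p}\right)$.

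For the lift, I would apply Hensel's lemma to $\mathscr{N}$, whose gradient is $(2x_1,\,-2Dx_2)$. At any solution of $\mathscr{N}(x) \equiv -C \pmod p$, we cannot have both $x_1\equiv 0$ and $x_2\equiv 0$ mod $p$ (else $-C \equiv 0$), and since $p$ is odd with $p\nmid D$, at least one component of the gradient is a unit. The standard Hensel argument then shows that each solution mod $p^j$ extends to exactly $p$ solutions mod $p^{j+1}$ (fix the variable with unit partial derivative and solve linearly for the other). Iterating $k-1$ times yields $N_k = p^{k-1} N_1 = p^k - \left(\frac{D}{p}\right) p^{k-1}$, as claimed. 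The only mild obstacle is the uniformity of the Hensel step, but this is routine once one has observed that the gradient is nonvanishing mod $p$ on the entire solution set, which is guaranteed by the hypothesis $p \nmid CD$.
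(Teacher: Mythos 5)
Your proof is correct, and it takes a genuinely different route from the paper's. The paper first proves the quadric $\mathscr{N}(x)=-C$ is nonempty mod $p^k$ (by Hensel), uses this to build a matrix $B$ with $\det B\equiv \pm C$ to argue that all $V_C$ with $p\nmid C$ have the same dimension, then \emph{averages}: it counts the solutions of $\mathscr{N}(x)\equiv 0\pmod p$ in $\inte_{p^k}^2$, subtracts from $p^{2k}$, and divides the remainder equally among the $p^{k-1}(p-1)$ unit residue classes $C$. You instead compute the fiber over a single $C$ directly: a clean count of $N_1$ (factoring the form in the split case, surjectivity of the norm map $\mathbb{F}_{p^2}^\times\to\mathbb{F}_p^\times$ in the inert case), followed by a Hensel lift that multiplies the count by $p$ at each step, using that the gradient $(2x_1,-2Dx_2)$ is nonvanishing mod $p$ on the solution set precisely because $p\nmid CD$. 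Your argument is more self-contained and avoids the slightly delicate ``$V_C = U_N(B)V_1$'' step (where the paper's $B$ has determinant $-C\neq 1$, so $U_N(B)$ is not literally a Weil-representation operator but rather an induced permutation of the $\zeta_x$-lines); the paper's averaging approach, on the other hand, simultaneously exhibits why the $p\mid C$ blocks have anomalous size, which is useful context for the later sections. Both are sound; yours is arguably the more transparent computation of the dimension itself.
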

\begin{proof}
To calculate the dimension we first prove that we can find $x_1$ and $x_2$ such 
that $x_1^2\equiv -C+Dx_2^2~~(\mbox{mod}~p^k).$ 
This is done by induction on $k$ where 
each induction step use Newton-Raphson approximation, a method known in number 
theory as Hensel's lemma. For $k=1$ we have $(p+1)/2$
different squares, so both the left hand side and the right hand side assumes 
$(p+1)/2$ different values and by the pigeon hole principle we must have a 
solution to the equation. Now assume we have 
$x_1$ and $x_2$ such that $x_1^2\equiv -C+Dx_2^2~~(\mbox{mod}~p^{n-1}).$ 
At least one 
of $x_1$ and $x_2$ is not divisible by $p$ and we may assume that this is $x_1.$
Putting $\widetilde{x_1}=x_1-\left(x_1^2-Dx_2^2+C\right)/(2x_1)$ we see that 
$\widetilde{x_1}^2\equiv -C+Dx_2^2~~(\mbox{mod}~p^{n}).$
Let $B=\left(\begin {array}{ccc} x_1 & x_2D\\
\noalign{\medskip} x_2 & x_1\end {array} \right)$ have determinant congruent to
$-C$ modulo $p^k.$ We see that $V_C=U_N(B)V_1,$ thus every $V_C$ has the same 
dimension. 
We now count the number of $(x,y)\in \inte_{p^k}^2$ such that 
$x^2-Dy^2\equiv 0~~(\mbox{mod}~p):$ If 
$\left(\frac{D}{p}\right)=-1$ we immediately get 
$x\equiv y\equiv 0~~(\mbox{mod}~p)$ 
which gives $p^{2k-2}$ solutions. But if $\left(\frac{D}{p}\right)=1$ 
we also get 
the solutions $y\in \inte_{p^k}^\times$ and 
$x\equiv \pm \sqrt{D}y~~(\mbox{mod}~p),$ so in this case the total number of 
solutions is $p^{2k-2}+2p^{k-1}p^{k-1}(p-1)=(2p-1)p^{2k-2}.$
From this we see that for $\left(\frac{D}{p}\right)=-1$ we have
$$\dim(V_C)
=\frac{1}{p^{k-1}(p-1)}\dim\left(\bigoplus_{C\in\inte_{p^k}^\times}V_C\right)
=\frac{p^{2k}-p^{2k-2}}{p^{k-1}(p-1)}=p^k+p^{k-1}$$
and for $\left(\frac{D}{p}\right)=1$ we have
$$\dim(V_C)
=\frac{1}{p^{k-1}(p-1)}\dim\left(\bigoplus_{C\in\inte_{p^k}^\times}V_C\right)
=\frac{p^{2k}-(2p-1)p^{2k-2}}{p^{k-1}(p-1)}=p^k-p^{k-1}.$$
\end{proof}

The evaluation of a Hecke eigenfunction will lead to the study of the solutions 
to the equation $x^2\equiv a~~\left(\mbox{mod}~p^k\right).$ 
It is easy to see that if 
$a\not\equiv 0~~\left(\mbox{mod}~p^k\right)$ 
and $p$ divides $a$ an odd number of 
times, then the equation has no solutions. If however $p$ divides $a$ an even 
number of times we may reduce the equation to 
$\widetilde x^2\equiv \widetilde a~~\left(\mbox{mod}~p^{k-2s}\right),$ where 
$p\notdivides \widetilde a.$
If $\widetilde a$ is a square modulo $p$ then this equation has two solutions 
$\pm x_0$ and the solutions to the original equation are 
$x\equiv \pm x_0x^s +p^{k-s}m~~\left(\mbox{mod}~p^k\right)$ 
for $m\in \inte_{p^s}.$ 
If $a\equiv 0~~\left(\mbox{mod}~p^k\right)$ then the solutions are 
$x\equiv p^{\lceil k/2\rceil }m~~\left(\mbox{mod}~p^k\right)$ for 
$m\in \inte_{p^{[k/2]}}.$ 
Since the solutions to the equation are written in quite different forms we 
formulate the evaluation in two different theorems corresponding to different 
right hand sides of the equation. 

\begin{thm}
\label{Sats1}
Let $\psi\in V_C$ be a normalized Hecke eigenfunction and assume that 
$p$ does not divide $C$ or $D.$ Let $b\in \inte_N$ and assume that the equation 
$x^2\equiv -C+Db^2~~\left(\mathrm{mod}~p^k\right)$ has the solutions 
$x\equiv \pm x_0p^s+p^{k-s}\inte_{p^s}~\left(\mathrm{mod}~p^k\right)$ 
for some $x_0$ and $s$ such that 
$p\notdivides x_0$ and 
$0\le s<k/2.$ Then 
\begin{align}
\label{uttryck1}
\psi(b)=\frac{1}{\sqrt{1-\left(\frac{D}{p}\right)\frac{1}{p}}}
\left(\alpha_\psi (b)\sum_{z=1}^{p^s}e\left(\frac{q_+(z)}{p^s}\right) 
+\beta_\psi (b)\sum_{z=1}^{p^s}e\left(\frac{q_-(z)}{p^s}\right)\right),
\end{align}
where $q_\pm(z)=r\left(\Theta _\psi (b)z 
\pm x_0Dbz^2+p^{k-2s}3^{-1}D^2b^2z^3\right)$
and $|\alpha_\psi (b)|=|\beta_\psi (b)|=1.$
\end{thm}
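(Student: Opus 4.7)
The plan is to expand $\psi$ in the orthogonal basis $\{\zeta_x\}_{\mathscr{N}(x)=-C}$ of $V_C$ and then evaluate at $b$. Write $\psi=\sum c_x\zeta_x$. Since $p\notdivides C$, the group $H_D$ acts simply transitively on $\{x\in\inte_{p^k}^2:\mathscr{N}(x)=-C\}$, which by Lemma \ref{dim} has cardinality $\dim V_C=p^{k-1}(p-\left(\frac{D}{p}\right))$. Combined with $\|\zeta_x\|_2^2=1/p^k$ and the fact that $|c_x|$ must be constant on this $H_D$-orbit for a Hecke eigenfunction, the normalization $\|\psi\|_2=1$ forces $|c_x|=(1-\left(\frac{D}{p}\right)/p)^{-1/2}$, which is exactly the global prefactor in (\ref{uttryck1}).

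Next, since $\zeta_x(b)=e(x_1(b-x_2)/N)\cdot[x_2\equiv b \pmod{p^k}]$, only the $x$ with $x_2\equiv b \pmod{p^k}$ contribute to $\psi(b)$. Set $x_2^*=b\bmod p^k$; then the admissible $x_1$'s are exactly $\epsilon x_0 p^s+p^{k-s}m$, $\epsilon\in\{+,-\}$, $m\in\inte_{p^s}$. I would fix basepoints $x_*^{\epsilon}=(\epsilon x_0 p^s,x_2^*)$, both of which lie in the orbit thanks to $x_0^2 p^{2s}\equiv -C+Db^2\equiv -C+D(x_2^*)^2\pmod{p^k}$. Combining Lemma \ref{dynamik} with $U_N(h)\psi=\lambda(h)\psi$ yields the cocycle relation
\begin{equation*}
c_{hx_*^\epsilon}=\lambda(h)^{-1}c_{x_*^\epsilon}\,e\!\left(\tfrac{r((hx_*^\epsilon)_1(hx_*^\epsilon)_2-(x_*^\epsilon)_1(x_*^\epsilon)_2)}{N}\right).
\end{equation*}
Parametrizing $H_D$ by pairs $(\alpha,\beta)$ with $\alpha^2-D\beta^2=1$, the constraint $h_m^\epsilon x_*^\epsilon\equiv(\epsilon x_0 p^s+p^{k-s}m,\,x_2^*)\pmod{p^k}$ (together with $b\equiv x_2^*\pmod{p^k}$) forces $\beta_m\equiv\epsilon p^{k-s}m\,x_2^*C^{-1}\pmod{p^k}$ and $\alpha_m\equiv 1\pmod{p^k}$; lifting via $\alpha_m^2=1+D\beta_m^2$ then determines $\alpha_m,\beta_m$ to the required precision $p^{2k}$.

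The technical heart is to expand the cocycle phase as a polynomial in $m$ modulo $N=p^{2k}$. Writing $\alpha_m=1+\tfrac{D}{2}\beta_m^2-\tfrac{D^2}{8}\beta_m^4+\cdots$ (a finite series, since $s<k/2$ makes the quartic already vanish mod $N$) and plugging into $(h_m^\epsilon x_*^\epsilon)_1(h_m^\epsilon x_*^\epsilon)_2$, I obtain three pieces. A linear-in-$m$ piece, which, together with the character $\lambda(h_m^\epsilon)^{-1}$ (linear in $m$ by cyclicity of $H_D$) and the linear contribution from $\zeta_x(b)=e(x_1(b-x_2^*)/N)$, combines into the term $r\Theta_\psi(b)z/p^s$. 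A quadratic-in-$m$ piece which, after simplifying using $x_0^2 p^{2s}\equiv -C+Db^2\pmod{p^k}$ and $b\equiv x_2^*$, reduces to $\pm rx_0 Dbz^2/p^s$. A cubic-in-$m$ piece $rp^{k-2s}3^{-1}D^2b^2z^3/p^s$, whose factor $3^{-1}$ comes out of the $(1+D\beta_m^2)^{1/2}$ expansion combined with the subleading correction to $\beta_m$. All $m$-independent phases (including $c_{x_*^\epsilon}$) are unimodular and get absorbed into $\alpha_\psi(b)$ and $\beta_\psi(b)$.

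The main obstacle is producing the cubic coefficient $p^{k-2s}3^{-1}D^2b^2$ correctly. Obtaining $\beta_m$ modulo $p^k$ and the leading correction to $\alpha_m$ is routine, but the cubic requires tracking $\alpha_m,\beta_m$ all the way to precision $p^{2k}$ via the determinant equation $\alpha_m^2-D\beta_m^2=1$ and then simplifying the resulting cross-terms using the congruence $x_0^2 p^{2s}\equiv -C+Db^2$. The appearance of $3^{-1}$ is the signature that one is seeing the logarithm of the formal group law on $H_D$, and this is precisely where the assumption $p>3$ enters, ensuring $3$ is a unit in $\inte_N$.
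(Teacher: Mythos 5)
Your proposal takes essentially the same route as the paper: expand $\psi$ in the $\zeta_x$ basis, observe only $x$ with $x_2\equiv b\pmod{p^k}$ survive, parametrize that piece of the $H_D$-orbit, and use the cocycle from Lemma~\ref{dynamik} together with the Hecke eigenvalue to collapse the coefficients into a cubic Gauss sum. The paper simply fixes the explicit generator $B(s)$ and proves the closed form for $B(s)^z$ by induction, where you solve for $h_m^\epsilon$ directly and Taylor-expand $\alpha_m=\sqrt{1+D\beta_m^2}$ --- the same computation with different bookkeeping (and a stray $\epsilon$ in your formula for $\beta_m$).
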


\begin{remark*}
The function $\Theta_\psi (b),$ which takes values in $\inte_{p^s},$ will be 
specified in equation (\ref{theta}).
\end{remark*}

\begin{proof}
We know that $\psi$ is a linear combination of $\zeta_x$ such that 
$\mathscr{N}(x)=-C.$ 
Fixing $x_0,$ any such $x$ can be written as $hx_0$ for some $h\in H_D,$ 
hence it follows from 
Lemma~\ref{dynamik} that all constants in this linear combination have the 
same absolute value $R.$ 
The orthogonality of $\{\zeta_x;x\in\{1,2,...,p^k\}^2 \}$
and Lemma~\ref{dim} gives
$$1=\| \psi \|_2^2=
\left(p^k-\left(\frac{D}{p}\right)p^{k-1}\right)\frac{R^2}{p^k},$$
thus $R=\left(1-\left(\frac{D}{p}\right)\frac{1}{p}\right)^{-1/2}.$
Since $\zeta_x(b)=0$ unless $x_2\equiv b~~\left(\mbox{mod}~p^k\right)$ 
the value of $\psi(b)$ is only a sum 
over $x\in \inte_{p^k}^2$ such that 
$x_1^2\equiv -C+Db^2~~\left(\mbox{mod}~p^k\right)$ and 
$x_2\equiv b~~\left(\mbox{mod}~p^k\right).$
By the assumptions of the theorem we have that
$x_1\equiv \pm x_0p^s+p^{k-s}\inte_{p^s}~\left(\mbox{mod}~p^k\right)$  
and we see that the values of $x$ can be represented by the elements 
$$\left\{B(s)^z\left(\begin {array}{c} x_0p^s\\b\end {array}\right);
z=0,1,...,p^s-1\right\}\cup
\left\{B(s)^z\left(\begin {array}{c} -x_0p^s\\b\end {array}\right);
z=0,1,...,p^s-1\right\}$$ in $\inte_N^2.$
Here $B(s)=\left(\begin {array}{ccc} 1+rDp^{2(k-s)} & p^{k-s}D\\
\noalign{\medskip} p^{k-s} & 1+rDp^{2(k-s)}\end {array} \right)$ and by 
induction it is easy to show that 
$$B(s)^z=\left(\begin {array}{ccc} 1+rDz^2p^{2(k-s)} & 
\left(p^{k-s}z+3^{-1}rDp^{3(k-s)}(z^3-z)\right)D\\
\noalign{\medskip} p^{k-s}z+3^{-1}rDp^{3(k-s)}(z^3-z)
& 1+rDz^2p^{2(k-s)}\end {array} \right).$$ 
Denote $\zeta_{\pm,z}=\zeta_{B(s)^z
\left(\begin {subarray}{c} \pm x_0p^s\\b\end {subarray}\right)}$
and call the constants in front of these functions
$Ra_{\pm,z}.$ We have that 
$$\psi(b)=R\left( \sum_{z=0}^{p^s-1}a_{+,z}\zeta_{+,z}(b) +  \sum_{z=0}^{p^s-1} 
a_{-,z}\zeta_{-,z}(b)\right).$$ 
If we use Lemma~\ref{dynamik} we see that 
$U_N(B(s))\zeta_{\pm,z-1}=e\left(\frac{r\left(f_{\pm}(z)-f_{\pm}(z-1)\right)}
{N}\right)
\zeta_{\pm,z}$ for $z=1,...,p^s-1,$ where
\begin{align*}
f_{\pm}(z)&=\left(\pm\left(1+rDz^2p^{2(k-s)}\right)p^sx_0+\left(p^{k-s}z+
3^{-1}rDp^{3(k-s)}(z^3-z)\right)Db\right)\\
&\times\left(\pm\left(p^{k-s}z+3^{-1}rDp^{3(k-s)}(z^3-z)\right)p^sx_0 
+ \left(1+rDz^2p^{2(k-s)}\right)b\right)\\
&\equiv \pm p^sx_0b+p^{k-s}\left(Db^2+
p^{2s}x_0^2-p^{2(k-s)}3^{-1}rD^2b^2\right)z\\
&\pm p^{2k-s}2x_0Dbz^2+p^{3(k-s)}3^{-1}2D^2b^2z^3~~~~~~~~~~~~~~~~(\mbox{mod}~N).
\end{align*}
Since $B(s)^{p^s}=\left(\begin {array}{ccc} 1 & p^{k}D\\
\noalign{\medskip} p^{k} & 1 \end {array} \right)$ 
Corollary~\ref{egenfunktioner} gives us that 
$U_N(B(s))\psi=e\left(\frac{r\widetilde C}{p^{k+s}}\right)\psi$ for some 
$\widetilde C\equiv C~~\left(\mbox{mod}~p^k\right)$ and this leads to  
\begin{align*}
a_{\pm,z}
&=e\left(\frac{-r\widetilde C}{p^{k+s}}\right)
e\left(\frac{r\left(f_\pm(z)-f_\pm(z-1)\right)}{N}\right)a_{\pm,z-1}\\
&=e\left(\frac{-r\widetilde Cz}{p^{k+s}}\right)
e\left(\frac{r\left(f_\pm(z)-f_\pm(0)\right)}{N}\right)a_{\pm,0}.
\end{align*}
But
$\zeta_{\pm,z}(b)=
e\left(\frac{-p^{k+s}x_0^2z\mp p^{2k-s}3rx_0Dbz^2-p^{3(k-s)}rD^2b^2z^3}
{N}\right)$ hence
\begin{align*}
a_{\pm,z}\zeta_{+,z}(b)&=e\left(\frac{-p^{k-s}r\widetilde Cz+rf_{\pm}(z)
-rf_{\pm}(0)-p^{k+s}x_0^2z}{N}\right)\\
&\times e\left(\frac{\mp p^{2k-s}3rx_0Dbz^2-p^{3(k-s)}
rD^2b^2z^3}{N}\right)a_{\pm,0}
=a_{\pm,0}e\left(\frac{q_\pm(z)}{p^s}\right),
\end{align*}
where $q_\pm(z)=r\left(\Theta_\psi(b)z 
\pm x_0Dbz^2+p^{k-2s}3^{-1}D^2b^2z^3\right)$
and  
\begin{align}
\label{theta}
\Theta_\psi(b)p^k\equiv -x_0^2p^{2s}-\widetilde C + 
Db^2-p^{2(k-s)}3^{-1}rD^2b^2 ~~\left(\mbox{mod}~p^{k+s}\right).
\end{align}
\begin{remark*}
Note that $\Theta_\psi(b)$ is well defined, but that it can not be lifted to an 
integer polynomial. Different Hecke eigenfunctions in $V_C$ correspond to 
different choices of $\widetilde{C}\equiv C\left(\mbox{mod}~p^k\right).$  
\end{remark*}
\end{proof}

\begin{thm}
\label{Sats2}
Let $\psi\in V_C$ be a normalized Hecke eigenfunction for some 
$C\in \inte_{p^k}^\times.$ If $b\in \inte_N$ fulfills that 
$-C+Db^2\equiv 0~~\left(\mathrm{mod}~p^k\right)$ then  
\begin{align}
\label{uttryck2}
\psi(b)=\frac{\alpha_\psi(b)}{\sqrt{1-\left(\frac{D}{p}\right)\frac{1}{p}}}
\sum_{z=1}^{p^{[k/2]}}e\left(\frac{q(z)}{p^{[k/2]}}\right),
\end{align}
where $q(z)=r\left(\Theta_\psi(b)z+p^{k-2[k/2]}3^{-1}CDz^3\right),$ 
$|\alpha_\psi(b)|=1$ and 
$$\Theta_\psi(b)p^k\equiv -\widetilde C + Db^2-p^{k+(k-2[k/2])}3^{-1}rCD~~
\left(\mathrm{mod}~p^{[3k/2]}\right).$$
\end{thm}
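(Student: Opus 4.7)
The plan is to follow the template of the proof of Theorem~\ref{Sats1}, with the observation that the hypothesis $-C+Db^2\equiv 0\pmod{p^k}$ causes the two strands $\pm x_0 p^s + p^{k-s}\inte_{p^s}$ of solutions of $x_1^2\equiv -C+Db^2\pmod{p^k}$ to collapse to the single strand $p^{\lceil k/2\rceil}\inte_{p^{[k/2]}}$. As in the previous proof, $\psi\in V_C$ is a linear combination of the basis functions $\zeta_x$ with $\mathscr{N}(x)=-C$; Lemma~\ref{dynamik} together with the transitive action of $H_D$ on $\mathscr{N}^{-1}(-C)$ forces all coefficients in this expansion to share a common absolute value, and Lemma~\ref{dim} combined with the normalization $\|\psi\|_2=1$ pins this value down to $R=1/\sqrt{1-\left(\frac{D}{p}\right)/p}$. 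Since $\zeta_x(b)\ne 0$ only when $x_2\equiv b\pmod{p^k}$, only the $p^{[k/2]}$ basis vectors indexed by $(x_1,b)$ with $x_1\in p^{\lceil k/2\rceil}\inte_{p^{[k/2]}}$ contribute to $\psi(b)$.

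Next I enumerate these contributing indices along a cyclic orbit: $B^z(0,b)$ for $z=0,1,\dots,p^{[k/2]}-1$, where $B=B([k/2])$ is the matrix introduced in the proof of Theorem~\ref{Sats1}. The bound $[k/2]\le k/2$ ensures $B\in SL(2,\inte_N)$, and $p\nmid b$ — which follows from $p\nmid CD$ together with $Db^2\equiv C\pmod{p^k}$ — makes the orbit faithful and surjective onto the solution set. Letting $Ra_z$ be the coefficient of $\zeta_{B^z(0,b)}$ in $\psi$ (so $|a_z|=1$), Lemma~\ref{dynamik} produces a recurrence $a_z=\lambda^{-1}e(r(f(z)-f(z-1))/N)a_{z-1}$, while Corollary~\ref{egenfunktioner} applied to $B^{p^{[k/2]}}$ — which coincides modulo $N$ with the matrix in that corollary evaluated at $t=1$ — determines $\lambda$. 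Solving the recurrence, multiplying by $\zeta_{B^z(0,b)}(b)$, and summing over $z$ yields the claimed formula: the quadratic term $\pm x_0Dbz^2$ of Theorem~\ref{Sats1} vanishes because the starting point now has first coordinate $0$, and the cubic coefficient $p^{k-2[k/2]}3^{-1}D^2b^2$ converts to $p^{k-2[k/2]}3^{-1}CD$ via the hypothesis $Db^2\equiv C\pmod{p^k}$.

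The principal bookkeeping hurdle is verifying the congruence $\Theta_\psi(b)p^k\equiv -\widetilde C+Db^2-p^{k+(k-2[k/2])}3^{-1}rCD\pmod{p^{[3k/2]}}$ for the linear phase. This requires carefully carrying the $O(p^{3(k-s)})$ correction in the off-diagonal entries of $B^z$ (coming from the binomial expansion of $B(s)^z$ used in the proof of Theorem~\ref{Sats1}) through the evaluation of $f(z)-f(0)$, and tracking the lift $\widetilde C\equiv C\pmod{p^k}$ that labels the specific Hecke eigenfunction within $V_C$. Once these congruences are in place, $|\alpha_\psi(b)|=1$ follows immediately from $|a_0|=1$, and the theorem is established as a straightforward specialization of the Theorem~\ref{Sats1} computation to the degenerate case where the two parameters $\pm x_0$ coalesce.
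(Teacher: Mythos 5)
Your proposal is correct and takes exactly the route the paper intends: the paper's own proof of Theorem~\ref{Sats2} is the one-line remark that it is the same argument as Theorem~\ref{Sats1}, and you have spelled out precisely that adaptation (a single coset of solutions beginning at $(0,b)$ in place of the two strands $\pm x_0p^s$, the choice $s=[k/2]$, and the substitution $D^2b^2\equiv CD\pmod{p^k}$ to rewrite the cubic coefficient and the phase $\Theta_\psi(b)$).
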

\begin{proof}
This is the same proof as for Theorem~\ref{Sats1}.  
\end{proof}

\section{{\large Exponential sums of cubic polynomials}}
We have seen that the values of the Hecke eigenfunctions are given by 
exponential sums over rings $\inte_{p^s}.$ In this chapter we will derive the 
results we need in order to study the supremum of the eigenfunctions. 
For convenience we will still assume that $p>3.$

\begin{define}
Let $n$ be a nonnegative integer. For $q\in\inte_{p^n}[x]$
we define 
$$S(q,n)=\sum_{z=1}^{p^n}e\left(\frac{q(z)}{p^n}\right).$$
\end{define}

\begin{lemma}
\label{exakt exp}
Let $q(z)=a_3z^3+a_2z^2+a_1z+a_0$ and assume that $p|a_3$ but 
$p\notdivides a_2.$ Then 
$|S(q,n)|=p^{n/2}.$
\end{lemma}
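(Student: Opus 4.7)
The plan is a $p$-adic method-of-stationary-phase for the cubic phase $q(z)/p^n$. After splitting $z = x + p^j y$ with $j$ chosen close to $n/2$, the Taylor expansion of $q$ around $x$ truncates modulo $p^n$ because the remainder terms pick up high powers of $p^j$, and the inner $y$-sum then forces $x$ to be a near-critical point of $q'$. The hypotheses $p \mid a_3$ and $p \nmid a_2$ are precisely what make Hensel's lemma applicable at every such critical point and what suppress the cubic correction when computing the local contribution.

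For even $n = 2m$ I would take $j = m$: writing $z = x + p^m y$ with $x,y \in \mathbb{Z}_{p^m}$, one verifies
\[
q(x + p^m y) \equiv q(x) + q'(x)\, p^m y \pmod{p^{2m}},
\]
since the quadratic Taylor term already carries $p^{2m}$ and the cubic Taylor term is $a_3 p^{3m} y^3$. Summing over $y$ then gives $p^m$ when $p^m \mid q'(x)$ and $0$ otherwise. Modulo $p$, $q'(x) \equiv 2 a_2 x + a_1$ is linear with a simple root, and $q''(x) \equiv 2 a_2 \not\equiv 0 \pmod p$ everywhere (using $p \mid a_3$ and $p > 2$), so Hensel lifts this root uniquely to some $x_* \in \mathbb{Z}_{p^m}$. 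The outer sum collapses to a single phase and $|S(q,n)| = p^m = p^{n/2}$.

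For odd $n = 2m+1$ I would take $j = m+1$, so $x \in \mathbb{Z}_{p^{m+1}}$ and $y \in \mathbb{Z}_{p^m}$, and the same Taylor truncation forces $p^m \mid q'(x)$. The surviving values are now the $p$ Hensel lifts $x_* + p^m t$ of $x_*$, with $t \in \mathbb{Z}_p$. Expanding $q(x_* + p^m t)$ modulo $p^{2m+1}$ produces a quadratic-in-$t$ phase whose leading coefficient reduces to $a_2$ modulo $p$ (the genuine cubic correction $a_3 p^{3m} t^3$ drops out precisely because $p \mid a_3$, which is needed in this case since $3m$ is not by itself large enough). The $t$-sum is therefore a nondegenerate quadratic Gauss sum modulo $p$, of modulus $\sqrt{p}$, and combining with the $p^m$ from the $y$-sum gives $|S(q,n)| = p^{m+1/2} = p^{n/2}$.

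The main technical point is the bookkeeping of $p$-adic valuations in the Taylor remainders across the two parities; once these truncations are justified, everything reduces to Hensel's lemma and the standard quadratic Gauss sum evaluation. The small edge cases ($n \le 1$) should be checked directly but are immediate, since for $n = 1$ the hypothesis $p \mid a_3$ kills the cubic term outright and the sum is literally a Gauss sum over $\mathbb{F}_p$.
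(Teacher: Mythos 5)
Your proof is correct, but it follows a genuinely different route from the paper's. The paper runs a two-step descent: write $z = z_1 + p^{n-1}z_2$ with $z_1\in\mathbb{Z}_{p^{n-1}}$, $z_2\in\mathbb{Z}_p$; the inner $z_2$-sum forces $z_1$ into a single residue class mod $p$, and substituting $z_1 = z_0 + pz$ (with $z_0$ the root of $q'\bmod p$) peels off a factor of $p$ and leaves $S(q_1,n-2)$ for a new cubic $q_1$ which again satisfies $p\mid a_3'$, $p\nmid a_2'$; induction closes the argument, with the base cases $n=0,1$ handled directly. Your argument instead does a single balanced split $z = x + p^{j}y$ with $j=\lceil n/2\rceil$, kills $x$'s that are not near-critical via the inner $y$-sum, and then invokes Hensel's lemma once to count the survivors: exactly one for $n$ even (giving $p^{n/2}$ on the nose), and $p$ lifts for $n$ odd, where the residual $t$-sum collapses to a nondegenerate quadratic Gauss sum contributing the extra $\sqrt{p}$. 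Both arguments hinge on the same structural fact -- $q'$ is linear mod $p$ with unit slope $2a_2$, so $q''$ is a unit along the critical locus -- and both are correct; the paper's recursion is shorter and avoids stating Hensel explicitly, while yours is closer to the classical stationary-phase template and makes the number of critical points visible in one pass.

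One small inaccuracy worth noting: in your odd case you write that $p\mid a_3$ is needed to kill the term $a_3p^{3m}t^3$ modulo $p^{2m+1}$. In fact $3m\ge 2m+1$ already for all $m\ge 1$ (with equality at $m=1$), so the $p^{3m}$ factor alone suffices; the hypothesis $p\mid a_3$ is used earlier and elsewhere (to make $q'$ linear mod $p$ and $q''\equiv 2a_2$), but not at this particular step. This does not affect the correctness of the proof.
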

\begin{proof}
It is trivial to see that $|S(q,0)|=1=p^{0/2}.$ On the other hand
$S(q,1)=\sum_{z=1}^{p}e\left(\frac{a_2z^2+ a_1z+a_0}{p}\right)$ and this 
Gauss sum is well known to have absolute value equal to $p^{1/2}$ 
(cf. \cite{Sch} chapter II.3). Now assume $n>1.$ The observation that we will 
use and use repeatedly is that if we have a polynomial $q\in \inte_{p^n}[z]$ then
$q(z_1+p^{n-1}z_2)\equiv q(z_1)+
q'(z_1)p^{n-1}z_2~~\left(\mbox{mod}~p^{n}\right).$ 
In this case this gives us that
\begin{align*}
S(q,n) &=  \sum_{z=1}^{p^n}e\left(\frac{q(z)}{p^n}\right)=
\sum_{z_1=1}^{p^{n-1}}\sum_{z_2=1}^{p} 
e\left(\frac{q(z_1+p^{n-1}z_2)}{p^n}\right)\\
&=\sum_{z_1=1}^{p^{n-1}}\sum_{z_2=1}^{p}
e\left(\frac{q(z_1)+q'(z_1)p^{n-1}z_2}{p^n}\right)\\
&=p\sum_{\substack{ z_1\in\inte_{p^{n-1}}\\
q'(z_1)\equiv 0~(\mbox{{\footnotesize mod}}~p)}}
e\left(\frac{q(z_1)}{p^n}\right)
=p\sum_{\substack{ z_1\in\inte_{p^{n-1}}\\z_1\equiv 
-a_1ra_2^{-1}~(\mbox{{\footnotesize mod}}~p)}}
e\left(\frac{q(z_1)}{p^n}\right)\\
&=p \sum_{z=1}^{p^{n-2}}e\left(\frac{q( -a_1ra_2^{-1}+zp)}{p^n}\right)
=p~e\left(\frac{q( -a_1ra_2^{-1})}{p^n}\right)S(q_1,n-2),
\end{align*}
where $q_1$ is a polynomial of degree 3 which fulfills the assumptions of the 
lemma. The proof now follows by induction.
\end{proof}

\begin{lemma}
\label{exp}
Let $q(z)=a_3z^3+a_1z+a_0$ and assume that $p\notdivides  a_3$ and that
$p^2\notdivides  a_1.$ Then $|S(q,n)|\le 2p^{n/2}.$ 
\end{lemma}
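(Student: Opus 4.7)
The plan is to mimic the inductive reduction of Lemma~\ref{exakt exp}, but this time tracking carefully the saddle-point structure coming from $q'(z)=3a_3z^2+a_1$. First I would dispose of the base cases: $n=0$ is trivial, and for $n=1$ the sum is a complete cubic exponential sum over $\mathbb{F}_p$, which (since $p>3$ and $p\notdivides a_3$) satisfies the Weil bound $|S(q,1)|\le 2\sqrt{p}$.

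For $n\ge 2$ I would apply the same Hensel-type reduction as in the proof of Lemma~\ref{exakt exp}: writing $z=z_1+p^{n-1}z_2$ with $z_1\in\inte_{p^{n-1}}$, $z_2\in\inte_p$, and summing over $z_2$ first produces
\begin{equation*}
S(q,n)=p\sum_{\substack{z_1\in\inte_{p^{n-1}}\\ q'(z_1)\equiv 0\,(\mathrm{mod}\,p)}}e\left(\frac{q(z_1)}{p^n}\right),
\end{equation*}
so everything depends on the solutions of $3a_3z^2+a_1\equiv 0\pmod p$. I would split into the two cases $p\notdivides a_1$ and $p\mid a_1$ (the latter necessarily of $p$-valuation one, by hypothesis).

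In the generic case $p\notdivides a_1$, there are $0$ or $2$ solutions $\pm z_0$ modulo $p$ depending on the quadratic character of $-a_1/(3a_3)$. The first alternative gives $S(q,n)=0$ and we are done. In the second, writing $z_1=\pm z_0+pw$ with $w\in\inte_{p^{n-2}}$ and Taylor expanding $q(\pm z_0+pw)$ (using $q'(z_0)=pu$) rewrites the sum as
\begin{equation*}
S(q,n)=p\sum_{\epsilon=\pm 1}e\left(\frac{q(\epsilon z_0)}{p^n}\right)S(q_1^\epsilon,n-2),
\end{equation*}
with $q_1^\epsilon(w)=a_3pw^3+3a_3\epsilon z_0\,w^2+uw$. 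Since $p\mid a_3p$ while $p\notdivides 3a_3\epsilon z_0$, each $q_1^\epsilon$ fits the hypotheses of Lemma~\ref{exakt exp}, giving $|S(q_1^\epsilon,n-2)|=p^{(n-2)/2}$, and the two saddle contributions combine via the triangle inequality to produce the factor $2$: $|S(q,n)|\le 2p^{n/2}$. In the degenerate case $p\mid a_1$ (write $a_1=pa_1'$ with $p\notdivides a_1'$) the unique solution modulo $p$ is $z_1\equiv 0$, yielding $S(q,n)=p\,e(a_0/p^n)S(q_2,n-2)$ with $q_2(w)=a_3pw^3+a_1'w$. For $n=2$ this gives $|S(q,2)|=p$; for $n=3$ the sum $S(q_2,1)$ is a linear Kummer sum and vanishes; and for $n\ge 4$ applying the Hensel reduction to $q_2$ produces the unsolvable condition $q_2'(w)\equiv a_1'\not\equiv 0\pmod p$, so $S(q_2,n-2)=0$. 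In every subcase $|S(q,n)|\le p^{n/2}\le 2p^{n/2}$.

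The main (mild) obstacle is verifying that after the Hensel reduction the auxiliary polynomial $q_1^\epsilon$ really satisfies the hypotheses of Lemma~\ref{exakt exp}, i.e.\ that the $w^2$ coefficient obtained from Taylor expanding around a nonzero saddle is a unit; this is where $p>3$ and $p\notdivides z_0$ are used. It is also the reason the hypothesis $p^2\notdivides a_1$ appears: in the degenerate case it forces the reduced linear coefficient $a_1'=a_1/p$ to be a unit, so that $S(q_2,1)$ vanishes and further Hensel reduction of $q_2$ yields an empty stationary-phase condition.
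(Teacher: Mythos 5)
Your argument is correct and follows essentially the same route as the paper: one Hensel-type stationary-phase reduction, a split according to whether $p\nmid a_1$ or $p\|a_1$, and an appeal to Lemma~\ref{exakt exp} in the generic two-saddle case to get the factor $2p^{n/2}$. You are in fact slightly more careful than the paper in the degenerate case $p\mid a_1$: the paper's final display asserts $S(q,n)=0$ for all $n>1$, but for $n=2$ one actually has $|S(q,2)|=p$ (and the intermediate sum over $\inte_{p^{n-3}}$ makes no sense for $n\le 2$), whereas your explicit treatment of $n=2$, $n=3$, and $n\ge 4$ closes this small gap while still landing comfortably under the bound $2p^{n/2}$.
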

\begin{proof}
For $n=1$ this is well known, see for instance \cite{Sch}, therefore we 
assume that $n>1.$
Using the same calculation as in the proof of Lemma~\ref{exakt exp} we obtain 
that
\begin{align}
\label{summa}
S(q,n) = p\sum_{\substack{ z_1\in\inte_{p^{n-1}}\\
q'(z_1)\equiv 0~(\mbox{{\footnotesize mod}}~p)}}
e\left(\frac{q(z_1)}{p^n}\right).
\end{align}
The equation $q'(z_1)\equiv 0~(\mbox{mod}~p)$ 
has at most two solutions modulo $p,$ 
hence this expression consists of at most two different sums of length 
$p^{n-2}.$ If $p\notdivides a_1$ these sums are of 
the form $e\left(x_0p^{-n}\right)S(q_1,n-2),$ where $x_0\in \inte$ and 
$q_1$ fulfills the assumptions of Lemma~\ref{exakt exp}. On the other hand, if 
$a_1=\widetilde{a_1}p$ with $p\notdivides \widetilde{a_1},$ we get 
\begin{align*}
S(q,n) &=
p\sum_{\substack{ z_1\in\inte_{p^{n-1}}\\
q'(z_1)\equiv 0~(\mbox{{\footnotesize mod}}~p)}}
e\left(\frac{q(z_1)}{p^n}\right)
=p~e\left(\frac{a_0}{p^n}\right)\sum_{z_1=1}^{p^{n-2}}
e\left(\frac{a_3pz_1^3+\widetilde{a_1}z_1)}{p^{n-2}}\right)\\
&=p^2~e\left(\frac{a_0}{p^n}\right)
\sum_{\substack{z_1\in\inte_{p^{n-3}}\\
\widetilde{a_1}\equiv 0~(\mbox{{\footnotesize mod}}~p)}}
e\left(\frac{a_3pz_1^3+\widetilde{a_1}z_1)}{p^{n-2}}\right)=0.
\end{align*}
\end{proof}

\begin{lemma}
\label{exp2}
Let $q(z)=a_3z^3+p^2a_1z+a_0$ and assume that $p\notdivides a_3.$ 
For $n\ge 3$ we have that $|S(q,n)|=p^2 |S(q_1,n-3)|,$ 
where $q_1(z)=a_3z^3+a_1z.$
\end{lemma}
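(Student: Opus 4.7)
The plan is to follow the same method used in Lemmas~\ref{exakt exp} and~\ref{exp}: perform a Hensel-type decomposition of the summation variable, identify the critical set modulo $p$ for the derivative $q'$, and then recognize the remaining inner sum as a dilated copy of $S(q_1,n-3)$.

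First I would write $z = z_1 + p^{n-1}z_2$ with $z_1 \in \inte_{p^{n-1}}$ and $z_2 \in \inte_p$, and use the identity $q(z_1+p^{n-1}z_2)\equiv q(z_1)+q'(z_1)p^{n-1}z_2~(\mbox{mod}~p^n)$. Summing over $z_2$ yields a factor of $p$ and restricts $z_1$ to the zero set of $q'$ modulo $p$. Since $q'(z)=3a_3z^2+p^2a_1$ and $p>3$ with $p\notdivides a_3$, the condition $q'(z_1)\equiv 0~(\mbox{mod}~p)$ is equivalent to $z_1\equiv 0~(\mbox{mod}~p)$.

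Next I substitute $z_1 = pw$ with $w$ ranging over $\inte_{p^{n-2}}$. A direct expansion gives $q(pw) = a_3p^3w^3 + p^3a_1w + a_0$, so
\begin{align*}
S(q,n) &= p\sum_{w\in\inte_{p^{n-2}}}e\!\left(\frac{a_3p^3w^3+p^3a_1w+a_0}{p^n}\right)\\
&= p\,e\!\left(\frac{a_0}{p^n}\right)\sum_{w\in\inte_{p^{n-2}}}e\!\left(\frac{a_3w^3+a_1w}{p^{n-3}}\right).
\end{align*}
The exponent in the inner sum depends only on $w$ modulo $p^{n-3}$ (this is where the assumption $n\ge 3$ is needed to make sense of the expression), so the $w$-sum collapses into $p$ identical copies of the sum over $\inte_{p^{n-3}}$, producing another factor of $p$. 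This gives $S(q,n) = p^{2}\,e(a_0/p^n)\,S(q_1,n-3)$, and taking absolute values yields the claimed identity.

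The only potentially subtle step is the bookkeeping in passing from $w\in\inte_{p^{n-2}}$ to $w\in\inte_{p^{n-3}}$, but this is routine provided one keeps track of the two powers of $p$ (one from the $z_2$-sum, one from the redundancy in the $w$-sum). No novel input is required beyond the techniques already used earlier in the paper.
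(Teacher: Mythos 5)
Your proof is correct and follows essentially the same route as the paper: decompose $z = z_1 + p^{n-1}z_2$, sum over $z_2$ to localize to $q'(z_1)\equiv 0\pmod p$ (which here forces $p\mid z_1$ since $p>3$ and $p\notdivides a_3$), substitute $z_1=pw$, and observe the inner summand depends only on $w$ mod $p^{n-3}$ to extract the second factor of $p$. The only stylistic difference is that the paper treats the first step as already established from the proof of Lemma~\ref{exakt exp} rather than re-deriving it, and it passes directly from the sum over $z_1\in\inte_{p^{n-1}}$ with $p\mid z_1$ to the rewritten form in a single display, but the content is identical.
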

\begin{proof}
Again we write
\begin{align*}
S(q,n)&=
p\sum_{\substack{ z_1\in\inte_{p^{n-1}}\\
q'(z_1)\equiv 0~(\mbox{{\footnotesize mod}}~p)}}
e\left(\frac{q(z_1)}{p^n}\right)
=p~e\left(\frac{a_0}{p^n}\right)
\sum_{z_1=1}^{p^{n-2}}e\left(\frac{q_1(z_1)}{p^{n-3}}\right)\\
&=p^2~e\left(\frac{a_0}{p^n}\right)S(q_1,n-3).
\end{align*}
\end{proof}

\begin{define}
For $\alpha\in \inte_{p^n}^\times$ and $n=1$ or $n=2$
we define
$$A_{\alpha, n}=\frac{\sup_{t\in\inte_{p^n}}|S(q_{\alpha,t},n)|}{p^{n/2}},$$ 
where $q_{\alpha,t}(z)=\alpha z^3 +tz.$
\end{define}
\begin{remark*}
$A_{\alpha, n}$ is of course a function of $p$ but this is suppressed since we 
often think of $p$ as fixed.
\end{remark*}

\begin{lemma}
\label{A}
For fixed $n$ and $p$, $A_{\alpha, n}$ assumes at most three different values 
and if 
$p\equiv 2 ~(\mathrm{mod}~~3)$ then $A_{\alpha, n}$ is independent of $\alpha.$ 
Moreover, $1\le A_{\alpha,1}\le 2$ and $\sqrt{2}< A_{\alpha,2}\le 2.$
\end{lemma}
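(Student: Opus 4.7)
I would first observe that the substitution $z \mapsto \lambda z$ for $\lambda \in \inte_{p^n}^\times$ yields $S(q_{\alpha,t},n) = S(q_{\alpha\lambda^3,\,t\lambda},n)$; taking the supremum and noting that $t \mapsto t\lambda$ permutes $\inte_{p^n}$ gives $A_{\alpha\lambda^3,n} = A_{\alpha,n}$. Hence $A_{\alpha,n}$ descends to a function on $\inte_{p^n}^\times / (\inte_{p^n}^\times)^3$. Since $p > 3$, the order of this quotient is $\gcd(3,p-1)$, which is $1$ if $p \equiv 2 \pmod 3$ and $3$ if $p \equiv 1 \pmod 3$. This gives both the count of at most three values and the independence of $\alpha$ in the case $p \equiv 2 \pmod 3$.

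\textbf{Step 2 (bounds for $n=1$).} The upper bound $A_{\alpha,1} \le 2$ is the classical Weil bound for a cubic exponential sum over $\inte_p$. For the lower bound I would use the $L^2$-identity
\[
\sum_{t \in \inte_p} |S(q_{\alpha,t},1)|^2 = p \cdot \#\{(z,w) \in \inte_p^2 : z \equiv w \pmod p\} = p^2,
\]
so the average of $|S|^2$ over $t \in \inte_p$ is $p$, forcing $\sup_t |S| \ge \sqrt{p}$ and thus $A_{\alpha,1} \ge 1$.

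\textbf{Step 3 (bounds for $n=2$).} Writing $z = z_1 + pz_2$ with $z_1, z_2 \in \inte_p$, orthogonality of characters in $z_2$ collapses the sum to
\[
S(q_{\alpha,t},2) = p \sum_{\substack{z_1 \in \inte_p \\ 3\alpha z_1^2 + t \equiv 0 \,(\mathrm{mod}\, p)}} e\!\left(\frac{\alpha z_1^3 + tz_1}{p^2}\right).
\]
The stationary phase equation has at most two roots in $\inte_p$, yielding $|S(q_{\alpha,t},2)| \le 2p$ and so $A_{\alpha,2} \le 2$. When both roots $\pm y_0 \pmod p$ exist, a short computation using $(p-y_0)^3 \equiv -y_0^3 + 3p y_0^2 \pmod{p^2}$ together with $3\alpha y_0^2 + t \equiv 0 \pmod p$ shows that the two phases cancel to exact negatives modulo $p^2$, so $S(q_{\alpha,t},2) = 2p\cos(\phi)$ with $\phi = 2\pi(\alpha y_0^3 + ty_0)/p^2$. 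To drive $|\cos\phi|$ close to $1$, I would fix $y_0 \in \inte_p^\times$ and let $t$ vary over the arithmetic progression $\{-3\alpha y_0^2 + pu : u \in \inte_p\}$; the phase $\phi$ then traces an arithmetic progression modulo $2\pi$ of step $2\pi y_0/p$, so some $u$ places $\phi$ within $\pi/p$ of $0$. This gives $\sup_t |S(q_{\alpha,t},2)| \ge 2p\cos(\pi/p)$, and since $p \ge 5$ we conclude $A_{\alpha,2} \ge 2\cos(\pi/5) = (1+\sqrt 5)/2 > \sqrt 2$.

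\textbf{Main obstacle.} Step 1 and the upper bounds are essentially formal. The genuine difficulty is the strict inequality $A_{\alpha,2} > \sqrt 2$: a plain $L^2$-average of $|S(q_{\alpha,t},2)|^2$ over $t \in \inte_{p^2}$ only yields $A_{\alpha,2} \ge 1$, so one must exploit the stationary phase structure from the $z_2$-orthogonality and the remaining freedom to shift $t$ by $p\inte$ to align $\phi$ near $0$. The numerical inequality $(1+\sqrt 5)/2 > \sqrt 2$ is precisely what makes the bound survive at the extremal prime $p = 5$.
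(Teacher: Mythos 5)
Your proof is correct. Steps 1 and 2, and the upper bound in Step 3, coincide with the paper's approach: the cube-substitution $z\mapsto\lambda z$ reducing $A_{\alpha,n}$ to a function on $\inte_{p^n}^\times/(\inte_{p^n}^\times)^3$, the Parseval/$L^2$-average for $A_{\alpha,1}\ge 1$, and the stationary-phase collapse bounding the number of critical points for the upper bound.

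Where you genuinely diverge is the lower bound $A_{\alpha,2}>\sqrt 2$. The paper extends the Parseval argument to $n=2$: it first counts the $t\in\inte_{p^2}$ for which $S(q_{\alpha,t},2)\ne 0$ (only $t=0$ and roughly half of the units contribute, fewer than $p^2/2$ in all), and then pigeonholes the $L^2$ mass onto this smaller set to force $\sup_t|S|^2 > 2p^2$. You instead exploit the explicit structure of the two stationary points: since $q(p-y_0)\equiv -q(y_0)\pmod{p^2}$, the collapsed sum is $2p\cos\bigl(2\pi q(y_0)/p^2\bigr)$, and by shifting $t$ along the residue class $t\equiv -3\alpha y_0^2\pmod p$ you place the phase within $\pi/p$ of $0$. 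Your argument is constructive where the paper's is a counting argument, and it produces the quantitatively stronger lower bound $A_{\alpha,2}\ge 2\cos(\pi/p)\to 2$, versus the paper's bound which only tends to $\sqrt 2$ from above; the extremal check at $p=5$, namely $(1+\sqrt 5)/2>\sqrt 2$, is exactly the right place to verify that the strict inequality survives. Both routes are sound; yours buys sharpness, the paper's avoids having to verify the exact-negation identity between the two critical phases.
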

\begin{proof}
Since the multiplicative group $\inte_{p^n}^\times$ is cyclic of order 
$(p-1)p^{n-1}$ we write the elements as $g^k,$ where $k\in \inte_{(p-1)p^{n-1}}.$
If $p\equiv 2 ~(\mbox{mod}~3)$ then 3 is invertible in 
$\inte_{(p-1)p^{n-1}}$ so we 
see that $g^k=(g^{k/3})^3$ is a cube. If $p\equiv 1 ~(\mbox{mod}~3)$ any 
element can be written as $g^l\left(g^{k}\right)^3$ where $l=0,1,2.$ 
We have that 
$$A_{\alpha\beta^3, n}=\frac{\sup_{t\in\inte_{p^n}}|S(q_{\alpha\beta^3,t},n)|}
{p^{n/2}}=\frac{\sup_{t\in\inte_{p^n}}|S(q_{\alpha,t\beta^{-1}}(\beta z),n)|}
{p^{n/2}}=A_{\alpha, n}$$ 
and from this the first claim follows. To prove that 
$A_{\alpha,1}\ge 1$ we notice that $\left\{e\left(\frac{-tz}{p}\right)
\right\}_{t\in \inte_{p}}$ is an orthonormal basis in 
$L^2\left( \inte_{p}\right).$ Thus 
\begin{align*}
1&=\left\| e\left(\frac{\alpha z^3}{p}\right) \right\|_2^2
=\sum_{t\in \inte_{p}}\left| \left< e\left(\frac{\alpha z^3}{p}\right), 
 e\left(\frac{-tz}{p}\right)\right>\right|^2\\ 
&\le p \sup_{t\in\inte_{p}}\left| \frac{1}{p} S(q_{\alpha,t},1)\right|^2
=A_{\alpha, 1}^2.
\end{align*}
To prove that $A_{\alpha, 2}> \sqrt{2}$ we use the same proof but we notice that
we only have to sum over $t$ such that $S(q_{\alpha,t},2)\varnotsign=0.$ 
By the proof 
of Lemma~\ref{exp} we see that this gives us that $t\equiv 0~~(\mbox{mod}~p)$  
or that $t$ is a unit 
such that $-3^{-1}\alpha^{-1}t$ is a square (otherwise the sum in (\ref{summa}) 
is empty). The number of such $t$ is less than 
$p^2/2$ and that gives our estimate.
That $A_{\alpha, n}\le 2$ follows directly from Lemma~\ref{exp} and the fact 
that $|S(\alpha z^3,2)|=p.$
\end{proof}

\begin{thm}
\label{expsats}
If $q_{\alpha,t}(z)=\alpha z^3 +tz$ and $\alpha\in \inte_{p^n}^\times$
then 
$$\sup_{t\in\inte_{p^n}}|S(q_{\alpha,t},n)|=\left\{\begin{array}{ll}
p^{2n/3} & ~~~~\textrm{if~ $n\equiv 0~~(\mathrm{mod}~3)$}\\
A_{\alpha,1} p^{2n/3-1/6} & ~~~~\textrm{if~ $n\equiv 1~~(\mathrm{mod}~3)$}\\
A_{\alpha,2}p^{2n/3-1/3} & ~~~~\textrm{if~ $n\equiv 2~~(\mathrm{mod}~3)$}
\end{array}\right. .$$
\end{thm}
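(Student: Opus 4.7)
The strategy is a strong induction on $n$, with $n=0,1,2$ as base cases and Lemma~\ref{exp2} providing the reduction from $n$ to $n-3$. The base cases are immediate: for $n=0$ the sum has a single term of modulus $1$, so $|S(q_{\alpha,t},0)|=1=p^{0}$; for $n=1,2$ the stated formulas are literally the definition of $A_{\alpha,1}$ and $A_{\alpha,2}$.

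For the inductive step with $n\ge 3$ I would split the supremum according to whether $p^{2}|t$ or not. If $p^{2}|t$, write $t=p^{2}t'$, so that $q_{\alpha,t}(z)=\alpha z^{3}+p^{2}t'z$ is in the form of Lemma~\ref{exp2} (with $a_{0}=0$, $a_{1}=t'$, $a_{3}=\alpha$, and $p\notdivides\alpha$). That lemma gives $|S(q_{\alpha,t},n)|=p^{2}\,|S(q_{\alpha,t'},n-3)|$. Since the right-hand side only depends on $t'$ modulo $p^{n-3}$, the supremum over $t$ with $p^{2}|t$ equals $p^{2}\cdot\sup_{t'\in\inte_{p^{n-3}}}|S(q_{\alpha,t'},n-3)|$, which by the inductive hypothesis is $p^{2}$ times the claimed value for $n-3$; a direct arithmetic check (noting that $n$ and $n-3$ lie in the same residue class modulo $3$) shows this is exactly the value claimed for $n$. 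If instead $p^{2}\notdivides t$, Lemma~\ref{exp} applies and yields $|S(q_{\alpha,t},n)|\le 2p^{n/2}$.

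It remains to verify that the contribution from $p^{2}|t$ is never beaten by the $2p^{n/2}$ upper bound from the other range, so that the supremum is indeed attained at $t=p^2 t'$. Using $A_{\alpha,1}\ge 1$ and $A_{\alpha,2}>\sqrt{2}$ from Lemma~\ref{A}, the three residue classes of $n$ modulo $3$ reduce to checking $p^{n/6}\ge 2$, $p^{(n-1)/6}\ge 2$, and $p^{(n-2)/6}\ge\sqrt{2}$ respectively, all of which hold for $p\ge 5$ in the relevant ranges of $n$ (the tightest case is $p=5$, $n=3$, where $\sqrt{5}>2$). This domination check is the only delicate step and is where the assumption $p>3$ enters essentially; for small $p$ the inequality could fail at small $n$ and the reduction step would break. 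With the domination in hand the induction closes and the three cases of the formula follow.
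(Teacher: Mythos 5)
Your proposal is correct and follows essentially the same approach as the paper: split the supremum over $t$ into $p^2\mid t$ and $p^2\nmid t$, bound the second range by $2p^{n/2}$ via Lemma~\ref{exp}, reduce the first range to $n-3$ via Lemma~\ref{exp2}, and close by induction. Your domination check is a bit more explicit than the paper's (which compresses it into the remark that $\sqrt{p}>2$), but it is the same argument.
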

\begin{proof}
For $n=0,1,2$ the proof is trivial, hence assume $n\ge 3.$ We see that
$\sup_{t\in\inte_{p^n}}|S(q_{\alpha,t},n)|=\max\left( \sup_{p^2|t}|
S(q_{\alpha,t},n)|,\sup_{p^2\notdivides t}|S(q_{\alpha,t},n)| \right)$ 
and that the 
last of the two expressions is less than $2p^{n/2}$ by Lemma~\ref{exp}. The 
first expression is equal to $p^2\sup_{t\in\inte_{p^{n-3}}}|S(q_{\alpha,t},n-3)|$
by Lemma~\ref{exp2} and this is always larger than $2p^{n/2}$ 
since $\sqrt{p}>2.$ The theorem now follows by induction.
\end{proof}

\section{{\large Supremum norms of Hecke eigenfunctions in $V_C$}}
From \cite{KR} and \cite{Kurl2} we know that normalized Hecke 
eigenfunctions fulfill 
$$\| \psi \|_\infty
\le \frac{2}{\sqrt{1-\left(\frac{D}{p}\right)\frac{1}{p}}}$$
if $N=p$ and as we will see this is also true for $N=p^2$ 
(if $\psi$ is orthogonal to $S_2(1,1)$) and for ``half'' 
of the Hecke eigenfunctions 
for a general $N=p^{2k}.$ In fact, this estimate is a very good approximation of 
the supremum norm of these functions:
\begin{thm}
\label{sup1}
Let $N=p^{2k}$ for some prime $p>3$ that does not divide $C$ or $D$ and assume 
that $\psi\in V_C$ is a normalized Hecke eigenfunction.  
If $\left(\frac{C}{p}\right)=-\left(\frac{D}{p}\right)$ or if $k=1$ then 
$$\frac{2}{\sqrt{1-\left(\frac{D}{p}\right)\frac{1}{p}}}
\left(1-\frac{\pi^2}{8N}\right) \le \| \psi \|_\infty
\le \frac{2}{\sqrt{1-\left(\frac{D}{p}\right)\frac{1}{p}}}.$$
\end{thm}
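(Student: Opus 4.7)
Both bounds reduce to the $s=0$ case of Theorem~\ref{Sats1}, where (\ref{uttryck1}) collapses to a two-term sum $\psi(b)=R\bigl(\alpha_\psi(b)+\beta_\psi(b)\bigr)$ with $R=\bigl(1-\left(\frac{D}{p}\right)/p\bigr)^{-1/2}$ and $|\alpha_\psi(b)|=|\beta_\psi(b)|=1$. The upper bound is immediate from this; the lower bound requires sweeping the relative phase.

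\textbf{Upper bound.} The hypothesis $\left(\frac{C}{p}\right)=-\left(\frac{D}{p}\right)$ forbids $p\mid -C+Db^2$ for any $b$: if $p\notdivides b$ then $D\equiv Cb^{-2}\pmod p$ forces $\left(\frac{D}{p}\right)=\left(\frac{C}{p}\right)$, while $p\mid b$ forces $p\mid C$. Hence every $b$ either satisfies $\psi(b)=0$ (when $-C+Db^2$ is a non-residue) or falls into the $s=0$ case of Theorem~\ref{Sats1}, yielding $|\psi(b)|\le 2R$. In the auxiliary case $k=1$ without the quadratic hypothesis, the extra possibility $-C+Db^2\equiv 0\pmod p$ is handled by Theorem~\ref{Sats2} with $[k/2]=0$, a one-term sum of modulus $R\le 2R$.

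\textbf{Lower bound.} Expand $\psi=\sum_{\mathscr N(x)=-C}c_x\zeta_x$ in the basis of $V_C$; the transitivity remark preceding Lemma~\ref{dim} asserts $|c_x|\equiv R$. Writing $b=x_2+p^k\tilde b$ with $x_2\in\{0,\ldots,p^k-1\}$ and $\tilde b\in\inte_{p^k}$, the definition of $\zeta_x$ gives $\zeta_x(b)=0$ unless $x_2\equiv b\pmod{p^k}$ and otherwise $\zeta_x(b)=e(x_1\tilde b/p^k)$. Choose (existence argued below) a residue $x_2^\ast$ for which $-C+D(x_2^\ast)^2$ is a nonzero quadratic residue modulo $p$. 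By Hensel lifting, the congruence $x_1^2\equiv -C+D(x_2^\ast)^2\pmod{p^k}$ has exactly two roots $\pm x_0\in\inte_{p^k}^\times$, and
\[
\psi(x_2^\ast+p^k\tilde b)=c_{+}\,e(x_0\tilde b/p^k)+c_{-}\,e(-x_0\tilde b/p^k),\qquad c_\pm:=c_{(\pm x_0,x_2^\ast)},\ |c_{\pm}|=R.
\]
Applying the identity $|e^{i\mu}+e^{i\nu}|=2|\cos((\mu-\nu)/2)|$ yields
\[
|\psi(b)|=2R\,\Bigl|\cos\Bigl(\tfrac12(\arg c_{+}-\arg c_{-})+\tfrac{2\pi x_0\tilde b}{p^k}\Bigr)\Bigr|.
\]
Since $\gcd(2x_0,p^k)=1$ and $p^k$ is odd, as $\tilde b$ ranges over $\inte_{p^k}$ the residues $2\pi x_0\tilde b/p^k\bmod\pi$ trace out $p^k$ equispaced points in $\reals/\pi\inte$ with gap $\pi/p^k$. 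Hence a suitable $\tilde b$ brings the cosine argument within $\pi/(2p^k)$ of $\pi\inte$, and the elementary estimate $\cos(\pi/(2p^k))\ge 1-\pi^2/(8p^{2k})=1-\pi^2/(8N)$ completes the lower bound.

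\textbf{Existence of $x_2^\ast$.} The image $S=\{-C+Dx_2^2\bmod p:x_2\in\inte_p\}$ has cardinality $(p+1)/2$. If no element of $S$ were a nonzero quadratic residue, then $S$ would lie in the $(p+1)/2$-element set of non-residues together with $0$, forcing equality. Under the quadratic hypothesis, $0\notin S$ because $Dx^2\equiv C$ has no solution when $CD^{-1}$ is a non-residue, already a contradiction. In the remaining case $k=1$ with $\left(\frac{C}{p}\right)=\left(\frac{D}{p}\right)$, the standard character-sum identity $\sum_{x\in\inte_p}\chi(Dx^2-C)=-\left(\frac{D}{p}\right)$ (the discriminant $4CD$ being a unit), when combined with the hypothetical $S=\{0\}\cup\{\text{non-residues}\}$, forces the sum to equal $-p+2$; this gives $p=3$, excluded by the assumption $p>3$. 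This final character-sum step is the only delicate point in the proof; the rest is direct computation.
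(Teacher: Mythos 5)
Your proof is correct and follows essentially the same route as the paper: reduce to the $s=0$ case of Theorem~\ref{Sats1} for the upper bound, then sweep $\tilde b$ so that the relative phase of the two summands collapses, using $\cos(\pi/(2p^k))\ge 1-\pi^2/(8N)$. The one place where you are more careful than the paper is the existence of $b$ with $-C+Db^2$ a nonzero quadratic residue mod $p$; the paper simply asserts this, while your counting argument (and the character-sum calculation in the $k=1$, $(\tfrac{C}{p})=(\tfrac{D}{p})$ case) supplies a complete justification.
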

\begin{proof}
We see that if $\left(\frac{C}{p}\right)=-\left(\frac{D}{p}\right)$ then 
$-C+Db^2\not \equiv 0~~(\mbox{mod}~p)$ for all $b,$ hence Theorem~\ref{Sats1} 
immediately gives 
\begin{align}
\label{8}
\| \psi \|_\infty\le \frac{2}{\sqrt{1-\left(\frac{D}{p}\right)\frac{1}{p}}}
\end{align}
in this situation. If $k=1$ then $s=0$ in Theorem~\ref{Sats1} and 
$[k/2]=0$ in Theorem~\ref{Sats2}, and this also gives the estimation (\ref{8}).
To prove the other inequality we pick $b\in \inte_N$ such that 
$\left(\frac{-C+Db^2}{p}\right)=1.$ 
We know (using the notation from the proof of 
Theorem~\ref{Sats1}) that 
\begin{align*}
\psi\left(b+tp^k\right)&= \frac{1}{\sqrt{1-\left(\frac{D}{p}\right)\frac{1}{p}}}
\left(a_{+,0}\zeta_{+,0}\left(b+tp^k\right)+a_{-,0}
\zeta_{-,0}\left(b+tp^k\right)\right)\\
&=\frac{1}{\sqrt{1-\left(\frac{D}{p}\right)\frac{1}{p}}}
\left(e\left(\frac{x_0t}{p^k}\right) a_{+,0}\zeta_{+,0}(b)
+e\left(\frac{-x_0t}{p^k}\right) a_{-,0}\zeta_{-,0}(b)\right)\\
&=\frac{e\left(\frac{-x_0t}{p^k}\right) a_{+,0}\zeta_{+,0}(b)}
{\sqrt{1-\left(\frac{D}{p}\right)\frac{1}{p}}}
\left( e\left(\frac{2x_0t}{p^k}\right)+\frac{a_{-,0}\zeta_{-,0}(b)}
{a_{+,0}\zeta_{+,0}(b)} \right).
\end{align*}
Since $x_0\not \equiv 0 ~~(\mbox{mod}~p)$
we can pick $t$ so that the difference 
$\theta$ of the arguments of the 
two expressions in the parenthesis is at most $\pi/p^k.$ Remembering that 
both the $a_{\pm,0}$ and $\zeta_{+,0}(b)$ have absolute value 1 we see that this 
$t$ gives us  
$$\left |\psi\left(b+tp^k\right)\right |=\frac{\sqrt{2+2\cos \theta}}
{\sqrt{1-\left(\frac{D}{p}\right)\frac{1}{p}}}
\ge\frac{ 2 - \frac{\theta^2}{4}}{\sqrt{1-\left(\frac{D}{p}\right)\frac{1}{p}}}
\ge \frac{2}{\sqrt{1-\left(\frac{D}{p}\right)\frac{1}{p}}}
\left(1-\frac{\pi^2}{8N}\right).$$
\end{proof}
The other ``half'' (neglecting $\bigoplus_{p|C}V_C$ for a moment) of the Hecke 
eigenfunctions have rather 
large supremum norms. As we shall see shortly these supremum norms assume at 
most three different values for a fixed $N.$
\begin{thm}
\label{supformel}
Let $N=p^{2k}$ for some prime $p>3$ and assume that $\psi\in V_C$ is a 
normalized Hecke eigenfunction for some 
$C\in \inte_{p^k}^\times.$ If 
$\left(\frac{C}{p}\right)=\left(\frac{D}{p}\right)$ and $k>1$ then
\begin{align}
\label{sup}
\| \psi \|_\infty=\frac{1}{\sqrt{1-\left(\frac{D}{p}\right)\frac{1}{p}}}\times
\left\{\begin{array}{ll}
p^{k/3}
 & ~~~~\textrm{if~ $k\equiv 0~~(\mathrm{mod}~3)$}\\
A_{36CD,2}~ p^{k/3-1/3}
& ~~~~\textrm{if~ $k\equiv 1~~(\mathrm{mod}~3)$}\\
A_{36CD,1}~ p^{k/3-1/6}
& ~~~~\textrm{if~ $k\equiv 2~~(\mathrm{mod}~3)$}\end{array}
\right. .
\end{align}
\end{thm}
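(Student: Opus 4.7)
The plan is to realize $\|\psi\|_\infty$ via Theorem~\ref{Sats2}. Because $\left(\frac{C}{p}\right)=\left(\frac{D}{p}\right)$, the quantity $CD^{-1}$ is a quadratic residue modulo $p$ and hence, by the Hensel-type lifting used in the proof of Lemma~\ref{dim}, there exist $b\in\mathbb{Z}_N$ with $Db^2\equiv C\pmod{p^k}$. For every such $b$ Theorem~\ref{Sats2} gives
\[
|\psi(b)| = \frac{|S(q_{\alpha,t(b)},[k/2])|}{\sqrt{1-\left(\frac{D}{p}\right)\frac{1}{p}}}, \qquad \alpha=r\cdot 3^{-1}p^{k-2[k/2]}CD,\quad t(b)=r\Theta_\psi(b),
\]
where $q_{\alpha,t}(z)=\alpha z^3+tz$ is precisely the polynomial studied in Theorem~\ref{expsats}. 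So it suffices to show that $b\mapsto \Theta_\psi(b)$ is surjective onto $\mathbb{Z}_{p^{[k/2]}}$; granting this, the supremum over such $b$ equals the one computed by Theorem~\ref{expsats}.

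For surjectivity, I write $b=b_0+p^k\beta$ with $\beta\in\mathbb{Z}_{p^k}$ and use $p^{2k}\equiv 0\pmod{p^{[3k/2]}}$ to collapse the congruence defining $\Theta_\psi$ to
\[
\Theta_\psi(b) \equiv \Theta_\psi(b_0) + 2Db_0\beta \pmod{p^{[k/2]}};
\]
since $2Db_0$ is a unit this is an affine surjection onto $\mathbb{Z}_{p^{[k/2]}}$. Now I apply Theorem~\ref{expsats}. When $k$ is even, $p^{k-2[k/2]}=1$ so $\alpha$ is a unit and Theorem~\ref{expsats} at $n=k/2$ gives the stated values; the subcases $k/2\pmod 3$ match $k\pmod 3$. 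When $k$ is odd, $\alpha$ has $p$-valuation exactly one, and the derivative computation used inside the proofs of Lemmas~\ref{exp} and~\ref{exp2} shows that $S(q_{\alpha,t},[k/2])=0$ unless $p\mid t$, in which case $S(q_{\alpha,t},[k/2])=p\cdot S(q_{\alpha/p,\,t/p},[k/2]-1)$ with $\alpha/p=r\cdot 3^{-1}CD$ now a unit; Theorem~\ref{expsats} at $n=(k-3)/2$, together with the extra factor of $p$, again yields the stated values. The identification $A_{r\cdot 3^{-1}CD,\,n}=A_{36CD,\,n}$ is Lemma~\ref{A}: the ratio $36CD\,/\,(r\cdot 3^{-1}CD)=108/r=216=6^3$ is a cube, using $2r\equiv 1\pmod N$.

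It remains to check that the points $b$ covered by Theorem~\ref{Sats1} (with $0\le s<k/2$) give no larger $|\psi(b)|$. For such $b$ the quadratic coefficient $\pm rx_0Db$ of $q_\pm$ is a unit while the cubic coefficient has positive $p$-valuation, so Lemma~\ref{exakt exp} gives $|S(q_\pm,s)|=p^{s/2}$ \emph{exactly}, whence $|\psi(b)|\le 2p^{s/2}/\sqrt{1-\left(\frac{D}{p}\right)\frac{1}{p}}\le 2p^{(k-1)/4}/\sqrt{1-\left(\frac{D}{p}\right)\frac{1}{p}}$. An elementary comparison shows $p^{k/3}\ge 2p^{(k-1)/4}$ for $p\ge 5$ and $k\ge 2$; in the boundary case $k=2$ only $s=0$ is allowed and the comparison reduces to $A_{36CD,1}\sqrt p\ge 2$, which holds since $A\ge 1$ and $p\ge 5$, and the factors $A_{\alpha,1}\ge 1$ and $A_{\alpha,2}>\sqrt 2$ from Lemma~\ref{A} only help. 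The one mildly delicate step in the whole argument is the odd-$k$ bookkeeping, where the extra factor of $p$ in $\alpha$ shifts the relevant subcase of Theorem~\ref{expsats} by one and one has to check that this shift really lands on the correct subcase in the statement being proved.
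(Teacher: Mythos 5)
Your proposal is correct and follows essentially the same path as the paper's own proof: use Theorem~\ref{Sats2}, establish surjectivity of $\Theta_\psi$ onto $\mathbb{Z}_{p^{[k/2]}}$ via the affine map $\beta\mapsto\Theta_\psi(b_0)+2Db_0\beta$, split into even/odd $k$ (peeling off one factor of $p$ in the odd case), invoke Theorem~\ref{expsats}, and then rule out the Theorem~\ref{Sats1} points by the bound $2p^{s/2}$. The one small slip is that $p^{k/3}\ge 2p^{(k-1)/4}$ actually fails at $k=2$ for $p=5$, but you already treat $k=2$ separately with $s=0$ exactly as the paper does, so the argument stands; your explicit check that $108/r=216=6^3$ for the $A_{36CD,n}$ normalization is a nice touch the paper leaves implicit.
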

\begin{proof}
Let us first estimate the expression in Theorem~\ref{Sats1},
that is equation (\ref{uttryck1}): If 
$b\equiv 0~~(\mbox{mod}~p)$ then 
$x^2\equiv -C+Db^2~~\left(\mbox{mod}~p^k\right)$ has at most 
2 different solutions and therefore we may assume that $b$ is a unit because 
otherwise $|\psi(b)|$ is much smaller than the expressions in equation 
(\ref{sup}). 
But then $|S(q_\pm,s)|=p^{s/2}$ by Lemma~\ref{exakt exp}, hence
$$|\psi(b)|\le \frac{2p^{s/2}}{\sqrt{1-\left(\frac{D}{p}\right)\frac{1}{p}}} 
\le \frac{2p^{(k-1)/4}}{\sqrt{1-\left(\frac{D}{p}\right)\frac{1}{p}}}.$$ 
We see that this is less than the claimed supremum norm if $k>2.$ If  
however $k=2$ then $s=0$ and using this we see that $|\psi(b)|$ is small also 
in this case. 
The expression in Theorem~\ref{Sats2} (equation (\ref{uttryck2})) 
has absolute value
$\left|\psi\left(b+tp^k\right)\right|
= \left(1-\left(\frac{D}{p}\right)\frac{1}{p}\right)^{-1/2}|S(q,[k/2])|$
where 
$$q(z)=r\left(\Theta_\psi\left(b+tp^k\right)z+
p^{k-2[k/2]}3^{-1}CDz^3\right).$$ 
By the definition of $\Theta_\psi$ we have that 
\begin{align*}
\Theta_\psi\left(b+tp^k\right)p^k & \equiv -\widetilde C+
D\left(b+tp^k\right)^2 - p^{k+(k-2[k/2])}3^{-1}rCD\\
&\equiv \left(\Theta_\psi (b) + 2Dbt\right)p^k~~
\left(\mbox{mod}~p^{[3k/2]}\right).
\end{align*}
Since $p\notdivides 2Db$ we see that, 
as we let $t$ run through all elements in $\inte_{p^k},$ the 
polynomial $q$ run through all polynomials of the form 
$q_\alpha(z)=\alpha z+p^{k-2[k/2]}3^{-1}rCDz^3$ with $\alpha\in\inte_{p^[k/2]}.$
We now study the cases when $k$ is even 
and when $k$ is odd separately:
If $k$ is odd we get $S(q_\alpha,[k/2])=0$ if $p\notdivides \alpha,$ hence 
$$\sup_{\alpha\in \inte_{p^{[k/2]}}}|S(q_\alpha,[k/2])|
= p\sup_{\alpha\in \inte_{p^{[k/2]-1}}}|S(w_\alpha,[k/2]-1)|,$$
where $w_\alpha(z)=\alpha z+3^{-1}rCDz^3.$ Applying Theorem~\ref{expsats} we 
get the expression we want. (Lemma~\ref{A} says that 
$A_{3^{-1}rCD,n}=A_{36CD,n}.$)
If $k$ is even we have that $q_\alpha=w_\alpha$ and we can apply 
Theorem~\ref{expsats} directly to get the desired expression.
\end{proof}

For completeness we also study the case when $p|D,$ that is the ramified case. 
Our evaluation procedure for the Hecke operators still works and we get the 
following result which is somewhat analogous to the known result for 
primes, see \cite{Kurl2}.

\begin{thm}
Let $\psi\in V_C$ be a normalized Hecke eigenfunction for some 
$C\in \inte_{p^k}^\times$ and assume that  $p|D.$ We have that
$$\sqrt{2}\left(1-\frac{\pi^2}{8N}\right) \le \| \psi \|_\infty
\le \sqrt{2}.$$
\end{thm}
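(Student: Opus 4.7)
The plan is to mirror the proof of Theorem~\ref{sup1}, replacing the dimension computation of Lemma~\ref{dim} by its ramified-case analogue. First I would observe that if $\left(\frac{-C}{p}\right) = -1$ then $V_C = \{0\}$, since $p \mid D$ forces $\mathscr{N}(x) \equiv x_1^2 \pmod p$ and $-C$ is then a non-residue. So I may assume $\left(\frac{-C}{p}\right) = 1$. Then for every $x_2 \in \inte_{p^k}$ the element $-C + Dx_2^2$ is a unit congruent to $-C \pmod p$, hence a square lifting by Hensel to exactly two square roots modulo $p^k$; this gives $\dim V_C = 2p^k$. The identical Hensel count applied to $a^2 - Db^2 \equiv 1 \pmod{p^k}$ shows that the reduction of $H_D$ modulo $p^k$ has order $2p^k$ and acts transitively on $\{x \in \inte_{p^k}^2 : \mathscr{N}(x) = -C\}$.

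With transitivity in hand, Lemma~\ref{dynamik} forces the coefficients $c_x$ in $\psi = \sum c_x \zeta_x$ to have a common absolute value $R$, and combined with $\|\zeta_x\|_2^2 = 1/p^k$ and $\|\psi\|_2 = 1$ this yields $R = 1/\sqrt{2}$ in place of the unramified value $1/\sqrt{1 - (D/p)/p}$. For any $b \in \inte_N$, only the two basis vectors $\zeta_x$ with $x = (\pm y, b)$ contribute to $\psi(b)$, where $\pm y$ are the two square roots of $-C + Db^2$ modulo $p^k$; since $|\zeta_x(b)| = 1$ on its support, the triangle inequality immediately gives the upper bound $|\psi(b)| \le 2R = \sqrt{2}$.

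For the lower bound I would repeat the phase-alignment argument of Theorem~\ref{sup1}. Writing $\zeta_x(b + tp^k) = \zeta_x(b)\, e(x_1 t/p^k)$ and letting $x_\pm = (\pm y, b)$, we have
\[
\psi(b + tp^k) = c_{x_+}\zeta_{x_+}(b)\, e(yt/p^k) + c_{x_-}\zeta_{x_-}(b)\, e(-yt/p^k).
\]
Since $y \in \inte_{p^k}^\times$ (because $y^2 \equiv -C \pmod p$ with $p \notdivides C$), some $t \in \inte$ aligns the two rotated phases to within $\pi/p^k$ in argument, and the elementary estimate $\sqrt{2 + 2\cos\theta} \ge 2 - \theta^2/4$ then gives $|\psi(b + tp^k)| \ge R(2 - \pi^2/(4N)) = \sqrt{2}(1 - \pi^2/(8N))$. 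The only essentially new work is redoing the dimension count and normalization under $p \mid D$; once $R = 1/\sqrt{2}$ is in place, the support structure and phase rotation mechanism are identical to the unramified case, so the main (and rather mild) obstacle is simply carrying out the Hensel lifting in the ramified setting.
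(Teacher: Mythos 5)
Your proposal is correct and follows essentially the same route as the paper: compute $\dim V_C = 2p^k$ via a two-square-roots count for each $x_2$ (valid precisely when $-C$ is a residue mod $p$, which you rightly note is the only non-vacuous case), use transitivity of $H_D$ on the solution set to force common coefficient modulus $R = 1/\sqrt{2}$, and then invoke the same phase-alignment argument as in Theorem~\ref{sup1} for the lower bound and the triangle inequality for the upper bound. The paper is slightly terser (it leaves the $(-C/p)=1$ hypothesis implicit and simply says ``argue as in the proof of Theorem~\ref{sup1}''), but the substance is identical.
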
 

\begin{proof}
Let us determine the dimension of $V_C,$ that is the number of solutions 
to $x_1^2-Dx_2^2=-C$ in $\inte_{p^k}.$ This is easy because for any $x_2$ the 
equation $x_1^2=-C+Dx_2^2$ has exactly two solutions so the total number of 
solutions is $2p^k.$ We fix some $x_0$ such that $\mathscr{N}(x_0)=-C$ and we 
notice that every $x$ with $\mathscr{N}(x)=-C$ can be written as $hx_0$ for some 
$h\in H_D.$ This shows that $\psi$ is a sum of $\zeta_x-$functions where
$\mathscr{N}(x)=-C$ and the constants in front of them have absolute value 
$\sqrt{p^k/(2p^k)}=1/\sqrt{2}.$ 
We now argue as in the proof of Theorem~\ref{sup1} to 
get the desired conclusion. 
\end{proof}

Last we will turn our focus to the case when $p|C.$ 
This implies that $p$ is either 
split or ramified. 
The case when $p|C$ and $p$ is ramified will not be treated in this paper but 
one can expect that the supremum norms in that case behave in the same manner 
as in Theorem~\ref{supformel}. Now assume that $p$ is split
and let $\sqrt{D}$ be an 
element in $\inte_N$ such that $\sqrt{D}^2=D.$ Now define   
$$V_+=\bigoplus_{\begin{subarray}{c} x\in \inte_{p^k}^2\\ 
x_1\equiv\sqrt{D}x_2\not\equiv 0~(\mbox{{\footnotesize mod}}~p)
\end{subarray}}\C\zeta_x.$$
and $V_-$ in the same manner but with a minus sign in front of $\sqrt{D}.$ Note 
that $\bigoplus_{p|C}V_C=V_+\oplus V_-\oplus S_{2k}(2k-1,1)$ and that $V_\pm$ 
are invariant under the action of $H_D.$
\begin{thm}
\label{last}
Let $N=p^{2k}$ for some prime $p>3$ and assume that $p|C$ and that $D$ is 
a quadratic residue modulo $p.$ 
If $\psi\in V_C \cap V_\pm$ is a normalized Hecke eigenfunction then 
$$|\psi(b)|=\left\{\begin{array}{ll}
\frac{1}{\sqrt{1-\frac{1}{p}}} & ~~~~\textrm{if~ $p\notdivides b$}\\
0 &~~~~\textrm{if~ $p|b$}\end{array}
\right. .$$
\end{thm}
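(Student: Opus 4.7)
The plan is to exploit the factorization $\mathscr{N}(x) = (x_1 - \sqrt{D}x_2)(x_1 + \sqrt{D}x_2)$, available because $p$ is split and $\sqrt{D}$ lives in $\inte_N$. Set $u = x_1 - \sqrt{D}x_2$ and $v = x_1 + \sqrt{D}x_2$; since $2$ and $\sqrt{D}$ are units this is a bijection on $\inte_{p^k}^2$. Under this change of coordinates the basis indices of $V_C \cap V_+$ are exactly the pairs $(u,v)$ with $uv \equiv -C \pmod{p^k}$, $p \mid u$ and $p \notdivides v$. For each unit $v \in \inte_{p^k}^\times$ the element $u = -Cv^{-1}$ is uniquely determined and is automatically divisible by $p$ (because $p \mid C$), so $\dim(V_C \cap V_+) = p^{k-1}(p-1)$.

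Next I would check that $H_D$ acts transitively on these basis indices. A short computation shows that $h \in H_D$ with entries $a,b$ sends $(u,v) \mapsto (\lambda u, \lambda^{-1} v)$ with $\lambda = a - b\sqrt{D}$, and a standard Hensel argument (first setting $a = (\lambda+\lambda^{-1})/2$, $b = (\lambda^{-1}-\lambda)/(2\sqrt{D})$ in $\inte_{p^k}$, then lifting to $\inte_N$ while preserving $a^2 - Db^2 = 1$) shows that $\lambda$ takes every value in $\inte_{p^k}^\times$ as $h$ ranges over $H_D$. Combined with Lemma~\ref{dynamik}, which says each $U_N(h)$ permutes the $\zeta_x$ with unit-modulus phases, this forces every normalized Hecke eigenfunction $\psi \in V_C \cap V_+$ to have the form $\psi = R \sum_x c_x \zeta_x$ with $|c_x| = 1$. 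Using $\|\zeta_x\|_2^2 = 1/p^k$ and the dimension count, $\|\psi\|_2 = 1$ forces $R = 1/\sqrt{1 - 1/p}$.

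Finally I would evaluate $\psi$ at $b$. Since $\zeta_x(b)$ vanishes unless $x_2 \equiv b \pmod{p^k}$ and has modulus $1$ in that case, the subcase $p \mid b$ is immediate: $x_2 \equiv 0 \pmod p$ contradicts the $V_+$ condition $x_1 \equiv \sqrt{D} x_2 \not\equiv 0 \pmod p$, so the sum is empty and $\psi(b) = 0$. For $p \notdivides b$, $x_2$ is determined modulo $p^k$, and I must count units $v = x_1 + \sqrt{D} b \in \inte_{p^k}^\times$ satisfying $uv \equiv -C \pmod{p^k}$ with $u = v - 2\sqrt{D} b$; this is the quadratic $v^2 - 2\sqrt{D} b\, v + C \equiv 0 \pmod{p^k}$. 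Modulo $p$ its roots are $v \equiv 0$ and $v \equiv 2\sqrt{D} b$, and only the latter is a unit; at that root the derivative $2\sqrt{D} b$ is also a unit, so Hensel's lemma produces a unique lift to $\inte_{p^k}$. Exactly one basis term contributes, giving $|\psi(b)| = R = 1/\sqrt{1 - 1/p}$. The step requiring the most care is the Hensel lift showing that $H_D$ surjects onto $\inte_{p^k}^\times$; the rest of the argument is just the $(u,v)$-parametrization and a single Hensel application to a quadratic.
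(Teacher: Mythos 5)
Your proof is correct and reaches the same conclusion as the paper, but the route through the key technical step — transitivity of $H_D$ on the basis indices of $V_C\cap V_+$ — is genuinely different and arguably cleaner. The paper works directly with $x_1,x_2$: given $\zeta_x,\zeta_y\in V_C\cap V_+$, it writes down candidate entries $h_1,h_2$ for $h$ by solving two linear congruences, observes that $h_2$ is only determined modulo $p^{k-l}$ (where $p^l\|C$), and then uses that residual freedom together with the identity $-C\det(h)\equiv -C\pmod{p^k}$ to adjust $h$ into $H_D$. You instead diagonalize the norm form by passing to $(u,v)=(x_1-\sqrt{D}x_2,\,x_1+\sqrt{D}x_2)$, in which coordinates $\mathscr{N}=uv$, $H_D$ acts by $(u,v)\mapsto(\lambda u,\lambda^{-1}v)$ with $\lambda=a-b\sqrt{D}$, and the orbit is the hyperbola $\{(-Cv^{-1},v):v\in\inte_{p^k}^\times\}$; transitivity then reduces to the statement that $h\mapsto a-b\sqrt{D}$ surjects from $H_D$ (over $\inte_N$) onto $\inte_{p^k}^\times$. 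Your approach exploits the split structure of $\inte_N[\sqrt{D}]\cong\inte_N\times\inte_N$ explicitly and gives the dimension count $p^{k-1}(p-1)$ for free from the parametrization, whereas the paper simply asserts the dimension; the paper's approach avoids introducing new coordinates and is slightly more self-contained. Both proofs finish identically: use Lemma~\ref{dynamik} and the orbit structure to conclude every coefficient has modulus $R=(1-1/p)^{-1/2}$, note that $\zeta_x(b)$ forces $x_2\equiv b\pmod{p^k}$, kill the $p\mid b$ case via the $V_\pm$ condition, and for $p\notdivides b$ apply Hensel's lemma to get a unique admissible $x_1$ (you phrase this as a quadratic in $v$ with a simple unit root at $v\equiv 2\sqrt{D}b$, the paper as a square root of $-C+Db^2$ with the prescribed residue $\sqrt{D}b$ mod $p$ — these are the same computation). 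One small point worth spelling out in your Hensel lift onto $\inte_{p^k}^\times$: when $\lambda+\lambda^{-1}\equiv 0\pmod p$ the entry $a$ is not a unit and one must adjust $b$ rather than $a$ to preserve $a^2-Db^2=1$ when lifting to $\inte_N$; since $a^2-Db^2=1$ forces at least one of $a,b$ to be a unit, this always works, but it deserves a sentence.
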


\begin{proof}
We may assume that $\psi\in V_C \cap V_+.$ 
To prove the theorem the main difficulty is to prove the following claim: 
If $\zeta_x,\zeta_y\in  V_C \cap V_+$ 
there is an $h\in H_D$ such that $hx\equiv y~~\left(\mbox{mod}~p^k\right).$
Assume that $p^l|C$ but $p^{l+1}\notdivides C.$ We see that 
$x_1\equiv \sqrt{D}x_2~~\left(\mbox{mod}~p^l\right)$ and that the same 
equality holds for $y.$ But then $p^l|x_1y_2-x_2y_1$ and we see that we can 
choose $h_2$ so that $-Ch_2\equiv x_1y_2-x_2y_1~~\left(\mbox{mod}~p^k\right).$
This determines $h_2$ modulo $p^{k-l}.$
Now choose $h_1\equiv (y_1-Dx_2h_2)x_1^{-1}~~\left(\mbox{mod}~p^k\right)$ and 
put $h=\left( \begin {array}{ccc} h_1 & h_2D\\
\noalign{\medskip} h_2 & h_1\end {array} \right).$ It is 
straightforward to verify that $hx\equiv y~~\left(\mbox{mod}~p^k\right),$ but 
in general $h\not\in H_D.$ In fact calculations show that $h_1^2-Dh_2^2
\equiv (y_1^2-(x_1y_2+x_2y_1)Dh_2)x_1^{-2}~~\left(\mbox{mod}~p^k\right)$
and we notice that the expression in front of $h_2$ is invertible. Since $h_2$
only is determined modulo $p^{k-l}$ we can choose $h_2$ so that $h\in H_D$ as 
long as we can show that $\det (h)\equiv 1~~\left(\mbox{mod}~p^{k-l}\right).$ 
But this follows immediately from the fact that $-C\equiv\mathscr{N}(y)
\equiv \mathscr{N}(hx)\equiv -C\det (h)~~\left(\mbox{mod}~p^k\right).$ 

Let $\psi\in V_C \cap V_+.$ The dimension of $V_C \cap V_+$ is $p^{k-1}(p-1),$ 
hence $\psi$ is a linear combination of $\zeta_x$ where the coefficients 
have absolute value $\sqrt{p^k/(p^{k-1}(p-1))}=(1-1/p)^{-1/2}.$ 
We see that if $p\notdivides b$ then 
$x^2\equiv -C+Db^2~~\left(\mbox{mod}~p^k\right)$ has exactly one solution such 
that $x\equiv\sqrt{D}b~~\left(\mbox{mod}~p\right)$ and if $p|b$ the equation has 
no solutions such that $x\not\equiv 0~~\left(\mbox{mod}~p\right).$ 
\end{proof}

\begin{remark*}
If $p|C$ and $\psi\in V_C$ is a normalized Hecke eigenfunction orthogonal 
to $S_{2k}(2k-1,1),$ then Cauchy-Schwarz inequality applied to Theorem~\ref{last}
gives us 
$$\| \psi \|_\infty \le \sqrt{\frac{2}{1-\frac{1}{p}}}.$$ 
\end{remark*}

\begin{thm}
\label{supupp}
Let $N=p^{2k}$ for some prime $p>3$ and assume that $p\notdivides D.$ 
If $\psi\in L^2\left(\inte_{N}\right)$
is a normalized Hecke eigenfunction then $\| \psi \|_\infty\le N^{1/4}.$  
\end{thm}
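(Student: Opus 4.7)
I would prove the theorem by induction on $k$, with the trivial base case $k = 0$. For the inductive step, any normalized Hecke eigenfunction $\psi \in L^2(\mathbb{Z}_{p^{2k}})$ lies in some $V_C$ by Corollary~\ref{egenfunktioner}; since the decomposition $L^2(\mathbb{Z}_N) = S_{2k}(2k-1,1) \oplus S_{2k}(2k-1,1)^{\perp}$ is preserved by the Hecke algebra, I may assume $\psi$ lies in exactly one side.

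\emph{Old-form side.} Suppose $\psi \in S_{2k}(2k-1,1)$. Theorem~\ref{iso} provides a unitary intertwiner $T_1$ sending $\psi$ to a normalized Hecke eigenfunction on $L^2(\mathbb{Z}_{p^{2k-2}})$. Since $(T_1\psi)(x) = p^{-1/2}\psi(px)$ and $\psi$ is supported on multiples of $p$, we have $\|\psi\|_{\infty} = \sqrt{p}\,\|T_1\psi\|_{\infty}$; the inductive hypothesis then yields $\|\psi\|_{\infty} \le \sqrt{p}\cdot p^{(k-1)/2} = p^{k/2}$.

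\emph{New-form side.} When $\psi \perp S_{2k}(2k-1,1)$, the remark preceding Lemma~\ref{dim} forces $p \nmid C$ in the inert case, while in the split case either $p \nmid C$ or $\psi \in (V_+ \oplus V_-) \cap V_C$. I would apply Theorem~\ref{sup1} when $\left(\frac{C}{p}\right) = -\left(\frac{D}{p}\right)$ or $k = 1$, Theorem~\ref{supformel} when $\left(\frac{C}{p}\right) = \left(\frac{D}{p}\right)$ and $k \ge 2$, and the remark following Theorem~\ref{last} in the split case with $p \mid C$. The bounds coming from Theorem~\ref{sup1} and from that remark are both at most $\sqrt{5}$, which is dominated by $p^{k/2}$ for $p \ge 5$ and $k \ge 1$. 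The bound from Theorem~\ref{supformel} has the form $A_{36CD,n}\, p^{k/3 - \delta_k}/\sqrt{1-1/p}$ with $\delta_k \in \{0, 1/6, 1/3\}$ depending on $k \bmod 3$; using $A_{\alpha,n} \le 2$ from Lemma~\ref{A} (and the absence of the $A$-factor when $k \equiv 0 \pmod 3$), the desired inequality reduces to $2/\sqrt{1-1/p} \le p^{k/6 + \delta_k}$.

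\emph{Main obstacle.} The last inequality is the only delicate point. A direct case check shows it is strict with room to spare for all $(k,p)$ except the single edge case $k = 2$, $p = 5$ (where $\delta_k = 1/6$): there both sides equal $\sqrt{5}$, so the bound is saturated exactly at $N^{1/4} = 5$. This extremal case confirms both that the theorem is sharp at $N = 5^4$ and that the hypotheses $p > 3$ together with the sharp constant $A_{\alpha,1} \le 2$ from Lemma~\ref{A} are essential to the argument.
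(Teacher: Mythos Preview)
Your inductive reformulation is essentially the paper's argument: peeling off one layer via $T_1$ and invoking the bound for $p^{2(k-1)}$ is the same as the paper's direct choice of $m$, and your new-form case analysis matches the paper's use of Theorems~\ref{sup1}, \ref{supformel} and the remark after Theorem~\ref{last}. The numerical verification, including the edge case $p=5$, $k=2$ where $2/\sqrt{1-1/p}=p^{1/2}$, is correct.

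There is, however, one gap. The reduction ``I may assume $\psi$ lies in exactly one side'' is not justified by Hecke-invariance of the decomposition alone. In the split case with $p\mid C$ the space $V_C$ meets both $S_{2k}(2k-1,1)$ and its orthogonal complement, so a Hecke character may occur in both pieces and a normalized Hecke eigenfunction $\psi\in V_C$ can be a genuine mixture $\psi=\psi_{\mathrm{old}}+\psi_{\mathrm{new}}$; the triangle inequality combined with your two bounds then only yields $\|\psi\|_\infty\le N^{1/4}\sqrt{2}$. The paper closes exactly this gap by observing, via Theorem~\ref{last} together with Theorem~\ref{iso}, that the supports are \emph{disjoint}: any $\psi_{\mathrm{old}}\in S_{2k}(2k-1,1)$ is supported on $p\,\mathbb{Z}_N$, while $\psi_{\mathrm{new}}\in V_C\cap(V_+\oplus V_-)$ vanishes there. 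Hence $\|\psi\|_\infty=\max\bigl(\|\psi_{\mathrm{old}}\|_\infty,\|\psi_{\mathrm{new}}\|_\infty\bigr)$, and each term is bounded by $p^{k/2}\|\psi_{\mathrm{old}}\|_2$ (induction) and $\sqrt{2/(1-1/p)}\,\|\psi_{\mathrm{new}}\|_2$ (remark after Theorem~\ref{last}) respectively. Insert this disjoint-support observation and your induction goes through.
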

\begin{proof}
First assume that $p$ is inert. Then there is an integer $0\le m \le k$ 
such that $\psi\in S_{2k}(2k-m,m)$ but $\psi\not \in S_{2k}(2k-m-1,m+1).$ By  
Theorem~\ref{iso} $\psi\in S_{2k}(2k-m,m)\cong L^2
\left(\inte_{p^{2k-2m}}\right)$ 
and it is obvious that $T_m \psi$ must belong to $V_C$ for some 
$C\in \inte_{p^{2k-2m}}^\times .$  
Hence the estimates in 
Theorem~\ref{sup1} and Theorem~\ref{supformel} together with 
the fact that $T_m$ is unitary gives the estimate 
directly.
Now assume that $p$ is split. If $\psi\in V_C$ for some 
$C\in \inte_{p^{2k}}^\times$ then Theorem~\ref{sup1} and Theorem~\ref{supformel}
gives the estimate. If $\psi\in V_C$ and $p|C$ we write 
$\psi=\psi_0+\psi_1+...+\psi_k,$ where $\psi_m\in S_{2k}(2k-m,m)$ but 
$\psi_m$ is orthogonal to $S_{2k}(2k-m-1,m+1).$ Theorem~\ref{last} together with 
Theorem~\ref{iso} tells us that the support of $\psi_m$ is 
$\{x;p^m|x \wedge p^{m+1}\notdivides x\},$ 
hence the supports are all disjoint and we see that 
$\| \psi \|_\infty= \max_{0\le m \le k}\| \psi_m \|_\infty.$ 
By our last remark we see that 
$$\| \psi_m \|_\infty\le \sqrt{\frac{2}{1-\frac{1}{p}}}~ p^{m/2}\| \psi_m \|_2
\le \sqrt{\frac{2}{1-\frac{1}{p}}}~p^{m/2}$$ 
for $m<k$ and $\| \psi_k \|_\infty=p^{k/2}\| \psi_k \|_2\le p^{k/2}.$
\end{proof}

\begin{remark*}
Note that Theorem~\ref{supupp} is true for all $N'$ that could be written as a 
product of different $N$ of the form stipulated in the theorem.  
Also note that the estimates $|\psi(x)| \le \| \psi \|_\infty\le N^{1/4}$ 
implies that $h(\psi )\ge \frac{1}{2}\log N,$ the estimate in 
Theorem~\ref{entropi}.
\end{remark*}


\begin{thebibliography}{10}

\bibitem{Ana2}
Nalini Anantharaman, Herbert Koch, and Stéphane Nonnenmacher.
\newblock Entropy of eigenfunctions.
\newblock {\em http://hal.archives-ouvertes.fr/hal-00141310/en}.

\bibitem{Ana}
Nalini Anantharaman and Stéphane Nonnenmacher.
\newblock Entropy of semiclassical measures of the {W}alsh-quantized baker's
  map.
\newblock {\em Ann. H. Poincaré}, 8:37--74, 2007.

\bibitem{Esp1}
Mirko Degli~Esposti.
\newblock Quantization of the orientation preserving automorphisms of the
  torus.
\newblock {\em Ann. Inst. H. Poincar\'e Phys. Th\'eor.}, 58(3):323--341, 1993.

\bibitem{Esp2}
Mirko Degli~Esposti, Sandro Graffi, and Stefano Isola.
\newblock Classical limit of the quantized hyperbolic toral automorphisms.
\newblock {\em Comm. Math. Phys.}, 167(3):471--507, 1995.

\bibitem{Gur}
Shamgar Gurevich and Ronny Hadani.
\newblock On {H}annay-{B}erry {E}quivariant {Q}uantization of the {T}orus.
\newblock {\em Preprint}.

\bibitem{Berry}
J.~H. Hannay and M.~V. Berry.
\newblock Quantization of linear maps on a torus-{F}resnel diffraction by a
  periodic grating.
\newblock {\em Phys. D}, 1(3):267--290, 1980.

\bibitem{Kli}
S{\l}awomir Klimek, Andrzej Le{\'s}niewski, Neepa Maitra, and Ron Rubin.
\newblock Ergodic properties of quantized toral automorphisms.
\newblock {\em J. Math. Phys.}, 38(1):67--83, 1997.

\bibitem{Kna}
Stefan Knabe.
\newblock On the quantisation of {A}rnold's cat.
\newblock {\em J. Phys. A}, 23(11):2013--2025, 1990.

\bibitem{Kurl2}
P{\"a}r Kurlberg.
\newblock Bounds on supremum norms for {H}ecke eigenfunctions of quantized cat
  maps.
\newblock {\em Ann. H. Poincaré}, 8:75--89, 2007.

\bibitem{KR2}
P{\"a}r Kurlberg and Ze{\'e}v Rudnick.
\newblock Hecke theory and equidistribution for the quantization of linear maps
  of the torus.
\newblock {\em Duke Math. J.}, 103(1):47--77, 2000.

\bibitem{KR}
P{\"a}r Kurlberg and Ze{\'e}v Rudnick.
\newblock Value distribution for eigenfunctions of desymmetrized quantum maps.
\newblock {\em Internat. Math. Res. Notices}, (18):985--1002, 2001.

\bibitem{Mez}
Francesco Mezzadri.
\newblock On the multiplicativity of quantum cat maps.
\newblock {\em Nonlinearity}, 15(3):905--922, 2002.

\bibitem{Sar}
Ze{\'e}v Rudnick and Peter Sarnak.
\newblock The behaviour of eigenstates of arithmetic hyperbolic manifolds.
\newblock {\em Comm. Math. Phys.}, 161(1):195--213, 1994.

\bibitem{Sch}
Wolfgang~M. Schmidt.
\newblock {\em Equations over finite fields. {A}n elementary approach}.
\newblock Springer-Verlag, Berlin, 1976.
\newblock Lecture Notes in Mathematics, Vol. 536.

\bibitem{See}
A.~Seeger and C.~D. Sogge.
\newblock Bounds for eigenfunctions of differential operators.
\newblock {\em Indiana Univ. Math. J.}, 38(3):669--682, 1989.

\bibitem{Weh}
Alfred Wehrl.
\newblock On the relation between classical and quantum-mechanical entropy.
\newblock {\em Rep. Math. Phys.}, 16(3):353--358, 1979.

\bibitem{Zel}
Steven Zelditch.
\newblock Index and dynamics of quantized contact transformations.
\newblock {\em Ann. Inst. Fourier (Grenoble)}, 47(1):305--363, 1997.

\end{thebibliography}

\end{document}